\newif\ifdviwin
\newif\ifdviwin
\def\cA{\mathcal{A}}
\def\cU{\mathcal{U}}
\def\cS{\mathcal{S}}
\def\cL{\mathcal{L}}
\def\cN{\mathcal{N}}
\let\landa=\lambda
\let\alfa=\alpha
\let\parc=\partial
\def\ep{\varepsilon}
\def\landa{\lambda}
\def\lap{\Delta}
\def\flecha{\rightarrow}
\def\esiz{\langle}
\def\esde{\rangle}
\def\cte.{\mathop{\rm cte.}\nolimits}
\def\N{\mathbb{N}}
\def\L{\mathbb{L}}
\def\R{\mathbb{R}}
\def\Z{\mathbb{Z}}
\def\C{\mathbb{C}}
\def\A{\mathbb{A}}
\def\D{\mathbb{D}}
\def\H{\mathbb{H}}
\def\S{\mathbb{S}}
 \newtheorem{defi}{Definition}
 \newtheorem{teo}[defi]{Theorem}
 \newtheorem{pro}[defi]{Proposition}
 \newtheorem{cor}[defi]{Corollary}
 \newtheorem{lem}[defi]{Lemma}
 \newtheorem{remark}[defi]{Remark}
 \newenvironment{proof}{\rm \trivlist \item[\hskip \labelsep{\it
      Proof}:]}{\par\nopagebreak \hfill $\Box$ \endtrivlist}
\numberwithin{equation}{section}
\begin{document}
\mbox{}\vspace{0.4cm}\mbox{}

\begin{center}
\rule{15cm}{1.5pt}\vspace{0.5cm}

\renewcommand{\thefootnote}{\,}
{\Large \bf Surfaces of constant curvature in $\R^3$ \\[0.3cm] with isolated singularities }\\

\vspace{0.5cm} {\large José A. Gálvez$^a$, Laurent Hauswirth$^b$ and Pablo Mira$^c$}\\
\vspace{0.3cm} \rule{15cm}{1.5pt}
\end{center}
  \vspace{0.5cm}
{\small $\mbox{}^a$ Departamento de Geometría y Topología,
Universidad de Granada, E-18071 Granada, Spain. \\ e-mail:
jagalvez@ugr.es} \vspace{0.2cm}

{\small \noindent $\mbox{}^b$  Département de Mathématiques
Université de Marne-la-Vallée Cité Descartes - 5 boulevard Descartes
Champs-sur-Marne 77454 Marne-la-Vallée Cedex 2 France \\ e-mail:
laurent.hauswirth@univ-mlv.fr} \vspace{0.2cm}

{\small \noindent $\mbox{}^c$ Departamento de Matemática Aplicada y
Estadística,
Universidad Politécnica de Cartagena, E-30203 Cartagena, Murcia, Spain. \\
e-mail: pablo.mira@upct.es} \vspace{0.2cm}

{\small \noindent Keywords: surfaces of constant curvature, isolated
singularities, conical singularities, Monge-Ampère equation,
harmonic maps, geometric Cauchy problem, peaked sphere.}

\vspace{0.3cm}

\begin{abstract}
We prove that finite area isolated singularities of surfaces with
constant positive curvature $K>0$ in $\R^3$ are removable
singularities, branch points or immersed conical singularities. We
describe the space of immersed conical singularities of such surfaces in
terms of the class of real analytic closed locally convex curves in $\S^2$ with
admissible cusp singularities, characterizing when the singularity is actually embedded.
In the global setting, we describe
the space of \emph{peaked spheres} in $\R^3$, i.e. compact convex
surfaces of constant curvature $K>0$ with a finite number of
singularities, and give applications to harmonic maps and constant
mean curvature surfaces.
\end{abstract}

\section{Introduction}

It is a classical result that any complete surface of constant
curvature $K>0$ in $\R^3$ is a round sphere. Thus, if one extends by
analytic continuation a local piece of such a \emph{$K$-surface} in $\R^3$ other
than a sphere, singularities will eventually appear. It is hence
natural to consider $K$-surfaces in $\R^3$ in the presence of
singularities, and to investigate how the nature of these
singularities determines the global geometry of the surface.

Surfaces of positive constant curvature with singularities are
related to other natural geometric theories. For instance, they are
parallel surfaces to constant mean curvature surfaces, their Gauss
map is harmonic into $\S^2$ for the conformal structure of the
second fundamental form, and this Gauss map can often be realized as
the vertical projection of a minimal surface in the product space
$\S^2\times \R$. Also, when viewed as graphs, these surfaces are
solutions to one of the most widely studied elliptic Monge-Ampère
equations:

\begin{equation}\label{Kequation}
u_{xx} u_{yy} -u_{xy}^2 = K(1+u_x^2 +u_y^2)^2, \hspace{1cm} K>0.
\end{equation}
Thus, the regularity theory for $K$-surfaces in $\R^3$ is tightly
linked to the regularity theory of Monge-Ampère equations.

\begin{figure}
  \begin{center}
   \includegraphics[clip,width=4cm]{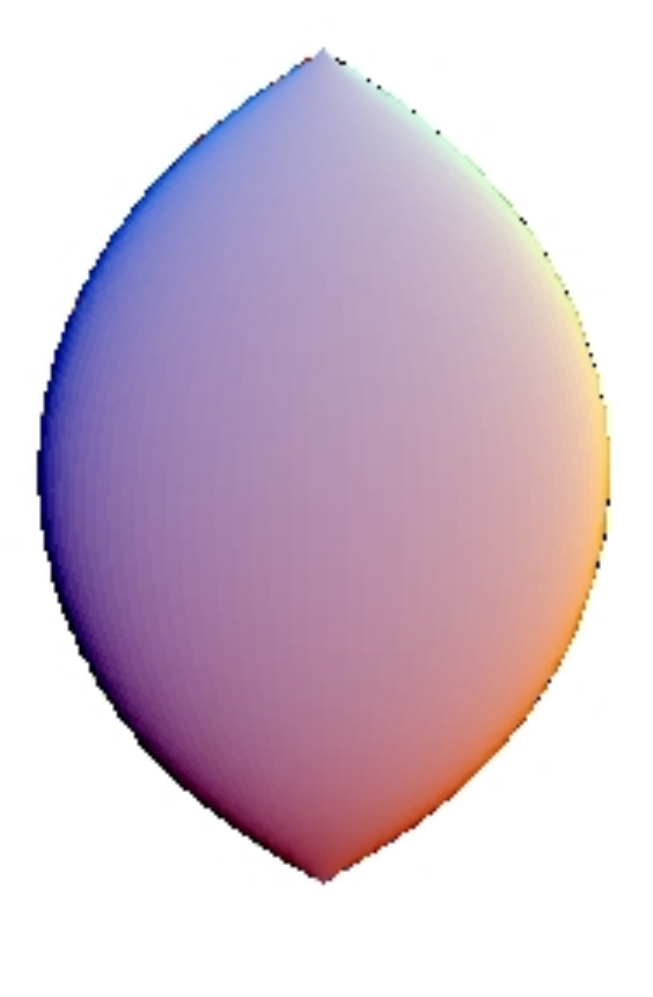}
\caption{A rotational peaked sphere ($K=1$) in $\R^3$}
 \end{center}
\end{figure}

It is remarkable that surfaces of positive constant
curvature in $\R^3$ can be regularly embedded around an isolated singularity,
as shown by the rotational example in Figure 1. This type of singularities does not appear in many other geometric theories, and even for the case of $K$-surfaces in $\R^3$ they only appear in special circumstances. Still, $K$-surfaces in $\R^3$ having only isolated singularities are of central interest to the theory from several points of view. We explain this next, together with our contributions to the topic.

\vspace{0.2cm}

\noindent {\bf 1. Singularity theory.} The singularities of $K$-surfaces in $\R^3$ generically form curves in $\R^3$. Thus, $K$-surfaces only with isolated singularities constitute a rare phenomenon that happens when the surface with singularities has the biggest possible regularity (it is everywhere analytic except for some isolated points).

A natural problem in the theory of singularities is to understand when an isolated singularity is actually \emph{removable} in an adequate sense. In this paper, we will show (Theorem \ref{mainth1}) that an isolated singularity of a $K$-surface in $\R^3$ is \emph{extendable} (i.e. the surface extends smoothly to the puncture with a well-defined tangent plane at it) if and only if the mean curvature is bounded around the singularity. Also, we prove that an isolated singularity of a $K$-surface in $\R^3$ has finite area if and only if it has finite total mean curvature (see Proposition \ref{finito}).

\vspace{0.2cm}

\noindent {\bf 2. Ends of $K$-surfaces in $\R^3$.} Another global restriction of $K$-surfaces in $\R^3$ is that one cannot have complete ends in the theory (in particular, the
exterior Dirichlet problem for \eqref{Kequation} does not have a
solution). This contrasts with the situation in other related
theories, such as flat surfaces in hyperbolic $3$-space \cite{GMMi,GaMi,KUY,KRSUY} or
solutions to the Hessian one equation ${\rm det} (D^2 u)=1$, see
\cite{FMM,GMM,ACG,Mar}.

In any case, Figure 1 also suggests that these isolated singularities can be thought of as the most natural notion of an \emph{end} for a $K$-surface in $\R^3$ (i.e. it is the most regular way in which a non-spherical $K$-surface can end). The study of such an \emph{end}, although it is placed at a finite point in $\R^3$, is non-trivial. For instance, the unit normal does not extend continuously across that point, and the extrinsic conformal structure is that of an annulus (and not of a punctured disk).

In this paper we classify such \emph{ends} under more general assumptions. We will show in Theorem \ref{mainth2} that
any non-extendable isolated singularity of a $K$-surface in $\R^3$ with finite area has a \emph{limit} unit normal at the singularity, which
is a real analytic, closed strictly locally convex curve in $\S^2$, possibly having isolated singular points of cusp type of a certain shape. Conversely, we show that any such curve arises as the limit normal of a unique non-extendable isolated singularity of a $K$-surface in $\R^3$.

In the above classification, we also characterize when the surface is regularly embedded around the singularity. In this way, we prove (Theorem \ref{mainth3}) that the space of embedded isolated singularities of $K$-surfaces in $\R^3$ is in one-to-one
correspondence with the class of real analytic strictly convex
Jordan curves on the sphere $\S^2$, and that $K$-surfaces can be
analytically extended across any embedded isolated singularity by
Schwarzian reflection for the conformal structure of the second
fundamental form.

\vspace{0.2cm}

\noindent {\bf 3. Isometric embedding of abstracts metrics in $\R^3$ and peaked spheres.} A classical problem in differential geometry is to determine when a Riemannian surface $(M^2,g)$ can be isometrically immersed (or embedded) into $\R^3$, and if such an isometric realization is unique up to rigid motions in $\R^3$.  On the other hand, the class of conformal metrics of constant curvature $1$ with conical singularities on a Riemann surface has been widely studied, both from a local and global point of view. It is hence natural to study the isometric embedding problem for such metrics of constant curvature with conical singularities.

In this paper we will show that any non-extendable isolated singularity of finite area of a $K$-surface in $\R^3$ (all of which are classified in Theorem \ref{mainth2}) is, from an intrinsic point of view, a conical singularity, see Proposition \ref{coni}. Hence, our results describe the space of possible local isometric realizations into $\R^3$ of an abstract conical metric of curvature $1$.

In the global case, let us define a \emph{peaked sphere} in $\R^3$ as a closed convex
surface (i.e. the boundary of a convex region of $\R^3$) that is
everywhere regular except for a finite set of points, and has
constant curvature $K>0$ at regular points. These are, from many
different points of view, the most regular $K$-surfaces in $\R^3$
that one can consider globally. Note that a peaked sphere with no
singularities is a round sphere, that there are no peaked spheres
with exactly one singularity, and that a peaked sphere with exactly
two singularities is a rotational example like in Figure 1.

Combining results by Alexandrov, Pogorelov, Troyanov and Luo-Tian,
one can show that peaked spheres with an arbitrary number $n>2$ of
singularities exist, and that they constitute a $3n-6$ dimensional
family. These parameters come from the freedom in prescribing the
intrinsic conformal structure of the peaked sphere (which is that of
a finitely punctured sphere) and the cone angles at the
singularities, all of which are in $(0,2\pi)$ and must obey two
inequalities (see Section \ref{sec:intrinsic}). Thus, peaked spheres
can be well described from an intrinsic point of view, but they are
far from being understood in what regards their actual shape in
$\R^3$. For instance, we do not know necessary and sufficient
conditions under which a finite set of points $p_1, \dots, p_n\in
\R^3$ can be realized as the set of singularities of a peaked sphere
in $\R^3$.

By using the intrinsic classification of peaked spheres (see Theorem \ref{teointri}) and our analysis on embedded isolated singularities of $K$-surfaces in $\R^3$ in terms of their extrinsic conformal structure, we can provide some applications to harmonic maps into $\S^2$ and CMC surfaces in $\R^3$. In particular, we describe the space of solutions to the Neumann problem for harmonic diffeomorphisms into $\S^2$ (Theorem \ref{noiman}), we provide a new \emph{reflection principle} for CMC surfaces in $\R^3$ (Corollary \ref{rprinc}), and we obtain a family of complete branched CMC surfaces in $\R^3$ with an infinite number of ends, obtained by applying this reflection principle to a compact fundamental CMC piece whose boundary meets a finite set of spheres tangentially.

The paper goes as follows. In Section \ref{sec:prelim} we deal with
generalities on surfaces of constant curvature $K>0$ in $\R^3$ with
wave-front singularities in terms of their \emph{extrinsic conformal
structure}, i.e. the conformal structure given by the second
fundamental form of the surface. In Section \ref{sec:local} we prove
the results explained above regarding the classification of isolated singularities of $K$-surfaces in $\R^3$.  In Section \ref{sec:intrinsic} we expose the intrinsic classification
of peaked spheres with $n$ singularities. This result
follows from the classification results by Troyanov and Luo-Tian on
spherical cone metrics on $\S^2$ with cone angles in $(0,2\pi)$,
from Alexandrov's solution to the generalized Weyl's problem, and
from Pogorelov's theorems of local regularity and rigidity for
convex surfaces in $\R^3$. In Section \ref{sec:applic} we transfer
the intrinsic classification of peaked spheres in $\R^3$ to other
geometric theories (harmonic maps into $\S^2$ and CMC surfaces in $\R^3$), by means of their analytic properties with respect to their extrinsic conformal structure and our results of Section \ref{sec:local}. We end up the paper with
some relevant open problems about the extrinsic geometry of peaked
spheres in $\R^3$, and their relation to harmonic maps and constant
mean curvature surfaces.

\section{Surfaces of positive constant curvature in
$\R^3$}\label{sec:prelim}

Let $\psi:M^2\flecha \R^3$ denote an immersed surface with constant
curvature $K>0$ in $\R^3$. Up to a dilation, we shall assume that
$K=1$. Such surfaces will be called from now on \emph{$K$-surfaces} in $\R^3$.

By changing orientation if necessary, the second fundamental
form $II$ of the immersion $\psi$ is positive definite, and thus it
induces a conformal structure on $M^2$. This structure will be
called the \emph{extrinsic conformal structure} of the surface
$\psi$.

Specifically, we may view the surface as an immersion
$X:\Sigma\flecha \R^3$ from a Riemann surface $\Sigma$ such that, if
$z=u+iv$ is a complex coordinate on $\Sigma$ and $N:\Sigma\flecha
\S^2$ denotes the unit normal of the surface, then $\esiz
X_u,N_u\esde = \esiz X_v,N_v\esde <0$ and $\esiz X_u,N_v\esde =0$.
It is easy to check (see also \cite{GaMa}) that the condition $K=1$ implies
 \begin{equation}\label{forbas}
 X_u = N\times N_v, \hspace{1cm} X_v = - N\times N_u.
 \end{equation}
In particular, the unit normal $N:\Sigma\flecha \S^2$ is a harmonic
map into $\S^2$, and the immersion $X$ satisfies the equation

\begin{equation}\label{eqforX}
X_{uu}+X_{vv} = 2 X_u \times X_v.
\end{equation} Observe that $N$ is always a
local diffeomorphism and that, by \eqref{forbas}, the surface $X$ is
uniquely determined by $N$ up to translations.

In order to deal with the possible appearance of singularities for
the surface $X$, it is useful to introduce the following notions
(see \cite{KRSUY,SUY}, for instance).

\begin{defi}
A $C^{\8}$ map $f:M^2\flecha \R^3$ from a surface $M^2$ is called a \emph{frontal} if there exists a $C^{\8}$ map $N:M^2\flecha \S^2$ such that $\esiz df,N\esde =0$. Such a map is called the \emph{unit normal} of the frontal. If in addition $\esiz df,df\esde + \esiz dN,dN\esde$ is positive definite, we say that $f$ is a \emph{front} in $\R^3$.
\end{defi}
Clearly, any regular surface in $\R^3$ is a front, and any front is a frontal. We will use this terminology from now on.

Conversely to the previous discussion on $K$-surfaces in $\R^3$ in terms of their Gauss maps, if $N:\Sigma\flecha \S^2$ is a harmonic map such that
the smooth map $X$ given by \eqref{forbas} is single valued (e.g. if
$\Sigma$ is simply connected), then $X:\Sigma\flecha \R^3$ is a
frontal in $\R^3$, having $K=1$ at its regular
points. Moreover, the singular points of $X$ are precisely the points at which $N$
fails to be a local diffeomorphism. The frontal $X$ will be a front except at the points where $dN$ vanishes.

In terms of a conformal parameter $z=u+iv$ for the second fundamental form, the fundamental forms of
a $K$-surface $X$ are given by
\begin{equation}\label{fundam}
\left\{\def\arraystretch{1.5} \begin{array}{rrc} \esiz dX,dX\esde& =& Q \, dz^2 + 2\mu |dz|^2 + \bar{Q} d\bar{z}^2, \\
-\esiz dX,dN\esde&=& 2\rho |dz|^2,  \\
\esiz dN,dN\esde & = & -Q \, dz^2 + 2\mu |dz|^2 - \bar{Q}
d\bar{z}^2,
\end{array}\right.
\end{equation}
where:
 \begin{enumerate}
 \item
$Q dz^2 := \esiz X_z,X_z\esde dz^2=-\esiz N_z,N_z\esde dz^2$ is a holomorphic quadratic differential on $\Sigma$, which vanishes exactly at the umbilical points of the surface. We will call it the
\emph{extrinsic Hopf differential} of the surface.
 \item
$\mu,\rho :\Sigma \flecha (0,\8)$ are smooth positive real functions, which by the condition $K=1$ satisfy the relation
 \begin{equation}\label{romu}
 \rho^2 = \mu^2 - |Q|^2.
 \end{equation}
 \end{enumerate}
In order to define yet another geometric function of interest on the
surface, let us assume preliminarily that $Q(z_0)\neq 0$ for some
$z_0\in \Sigma$. Then, around $z_0$, the real analytic function
$\omega$ given by

\begin{equation}\label{omro}
\rho = |Q| \sinh \omega
\end{equation}
is well defined, and positive. Moreover, as $Q$ vanishes only at isolated points, $\omega$ can be defined globally on $\Sigma$, if we allow the possibility that $\omega=+\8$ at the umbilical points of the surface. Besides, by the Gauss equation of the surface,
 \begin{equation}\label{lapl}
\omega_{z\bar{z}} + |Q| \sinh \omega=0 \hspace{1cm} \text{(or,
equivalently, }  \omega_{z\bar{z}} + \rho =0).
 \end{equation}
Observe that the mean curvature of $X$ is given by any of these formulas:
 \begin{equation}\label{forH}
 H= \frac{\rho \, \mu}{\mu^2-|Q|^2}=\frac{\mu}{\rho} =\coth (\omega).
 \end{equation}

It is important for our purposes to understand in terms of the Gauss
map how singularities can occur on a surface with $K=1$. In order to
do so, assume first that $N:\Sigma\flecha \S^2$ is a harmonic map
such that $\esiz dN,dN\esde$ is given as in \eqref{fundam}. Let $X:\Sigma\flecha \R^3$ be the $K=1$ frontal in $\R^3$ described by \eqref{forbas}, where we assume that $\Sigma$ is simply connected. It is clear from \eqref{fundam} that $N$
fails to be a local diffeomorphism exactly at the points where $\mu
= |Q|$. Moreover, by \eqref{fundam}, these points agree with
the points at which $X:\Sigma\flecha \R^3$ is not an immersion.

Thus, if $N$ is a local diffeomorphism, $X$ will be an immersion. Let us analyze what happens on a \emph{singular point} of $N$, i.e. a point $z_0\in \Sigma$ such that $dN(z_0)$ has rank $\leq 1$. We will rule out the case that $N$ is singular on an open set of $\Sigma$, since this would imply that $X$ is everywhere singular.

Let $z_0\in \Sigma$ be a singular point of $N$. Then $\mu(z_0)=|Q|(z_0)$. If $\mu (z_0)= |Q|(z_0)\neq 0$, we have a point at which
$dN(z_0)$ has rank one. Consequently, it is well known (see \cite{Woo} for instance) that there exists a regular curve
$\Gamma\subset \Sigma$ passing through $z_0$ such that $dN$ has rank
one at every point of $\Gamma$. It is also direct that the function $\omega$ vanishes identically along $\Gamma$. Contrastingly, if $\mu
(z_0)=Q(z_0)=0$, we have a point where $dN(z_0)=0$. These points are
necessarily isolated, since $Q$ is holomorphic. There are two possible behaviors for the Gauss map around such points (see \cite{Woo}):
 \begin{enumerate}
   \item
$z_0$ is a meeting point of regular curves in $\Sigma$ at which $dN$
has rank one, or
 \item
The Gauss map has a \emph{branch point} at $z_0$. That is, there are local coordinates $(x,y)$ and $(u,v)$ on $\Sigma$ and $\S^2$ centered at $z_0$ and $N(z_0)$, respectively, such that $N$ has the form $u+iv= (x+iy)^k$, for some integer $k>1$.
 \end{enumerate}
In the first case, $z_0$ is not an isolated singularity for the
$K=1$ frontal $X$ given by \eqref{forbas}, since as we explained before, $X$ fails to be an immersion whenever $dN$ has rank $\leq 1$ . In the second case the
$K=1$ frontal has an isolated \emph{branch point}, at which its
first and second fundamental forms are zero. In that case, the
surface $X$ is regularly immersed but not regularly embedded around the branch point. Indeed, if it was embedded, by Lemma \ref{graf} the surface would be a graph around the branch point, and hence the Gauss map would be injective, a contradiction.

Another important fact about surfaces with $K=1$ in $\R^3$ is that
they appear as parallel surfaces of constant mean curvature (CMC)
surfaces in $\R^3$. Specifically, if $X:\Sigma\flecha \R^3$ is a
$K=1$ surface with Gauss map $N:\Sigma\flecha \S^2$, then
$f=X+N:\Sigma \flecha \R^3$ and $f^- =X-N:\Sigma\flecha \R^3$ are
CMC surfaces with $H=-1/2$ and $H=1/2$, respectively, possibly with
singular points. The singular points $z_0\in \Sigma$ for $f$ (resp.
$f^-$) appear when $\mu (z_0)= \rho (z_0)$ (resp. $\mu (z_0)=
-\rho(z_0)$). In both cases, by \eqref{romu}, it holds $Q(z_0)=0$ for
the singular points of $f$ and $f^-$. In fact, as the Hopf
differential vanishes exactly at the umbilical points of the $K=1$
immersion $X$, we see that the umbilical points of $X:\Sigma\flecha
\R^3$ are exactly the points of $\Sigma$ at which one of the two
parallel CMC surfaces has a singular point (the other one will have
an umbilical point).

Let us also remark that if $X:\Sigma\flecha \R^3$ is a frontal of
$K=1$ with singularities, constructed as explained above from a
harmonic map $N:\Sigma\flecha \S^2$, it still makes sense to define
the parallel CMC surfaces, and same criteria for the appearance of
singular points of these CMC surfaces hold.

\subsection*{Rotational peaked spheres in $\R^3$}

Let us consider the rotational $K=1$ surfaces  $X_A (u,v)$ given by $$X_A (u,v)=(f(u) \cos v, f(u) \sin v, h(u)), \hspace{1cm} (u,v)\in \Omega:=(-\pi/2,\pi /2)\times \R,$$ where $f(u)=A\cos u \, $ and $h(u)=\int_0^u \sqrt{1-A^2 \sin^2 u } \, du$, being $A\in (0,1)$ a real parameter (see Figure 1). Clearly, $X_A$ extends continuously to its closure $\bar{\Omega}$, so that $X_A(\pm \pi /2, \R)$ are two points in the axis of rotation.

In this way, we have for each $A\in (0,1)$ a rotational embedded surface with two isolated singularities, that will be called a \emph{rotational peaked sphere}. The distance between these singularities agrees with the extrinsic diameter of the rotational peaked sphere, and varies continuously between $2$ and $\pi$ in terms of $A$. The family $X_A$ varies between a sphere $(A=1)$ and a vertical segment of length $\pi$ ($A=0$). The unit normal $N_A$ of $X_A$ extends continuously to the boundary of $\Omega$, and $\alfa (v)= N_A (\pm \pi /2,v)$ is a circle in $\S^2$ of constant geodesic curvature $k_g = \pm \sqrt{(2-A^2)/(1-A^2)}$.

Consider now the change of coordinates $$s=s(u)= \int_0^u \frac{1}{\sqrt{1-A^2 \sin^2 r}} \, dr, \hspace{1cm} t=v.$$ It follows that $(s,t)$ are global conformal parameters for $X_A$ with respect to the second fundamental form. Using the conformal change $w= e^{z}$ (where $z=s+it$) we see that $X_A$ has the extrinsic conformal structure of an annulus $\mathbb{A}=\{w: |w| \in (r,R)\}$, with $r= e^{-a}$ and $R= e^a$, being $a= \int_0^{\pi/2} 1/\sqrt{1-A^2 \sin^2 u} \, du.$ Thus, the modulus $R/r= e^{2a}$ of this annulus varies with $A$ between $e^{\pi}$ and $\8$. In particular, conformal annuli with conformal modulus in $(1,e^{\pi})$ cannot be realized as the extrinsic conformal structure of a rotational peaked sphere.

The surfaces $X_A$ are parallel to the \emph{unduloids}, i.e. the
rotationally invariant embedded CMC surfaces in $\R^3$ ($H=1/2$ in
our situation).

\subsection*{The geometric Cauchy problem}

Let $\beta(u):I\subset \R\flecha \R^3$ and $V(u):I\subset \R\flecha
\S^2$ denote a real analytic regular curve and a real analytic map
into $\S^2$, with $\esiz \beta',V\esde=0$. By the Cauchy-Kovalevsky theorem, there is a unique solution $X(u,v)$ to the Cauchy problem for the equation \eqref{eqforX} and the initial conditions $X(u,0)=\beta(u)$ and $X_v(u,0)= -V(u)\times V'(u)$. Using this fact and equations \eqref{forbas}, \eqref{fundam} one can easily show that a necessary and
sufficient condition for the existence of a regular $K=1$ surface in
$\R^3$ passing through $\beta$ and whose unit normal along $\beta$
is given by $V$ is that $\esiz \beta',V'\esde \neq 0$ for all $u$.
In these conditions, the solution is necessarily unique. (See \cite{GaMi,GaMi2,ACG} for the solution to this \emph{geometric Cauchy problem} in other geometric contexts).

Moreover, if we remove one of the regularity conditions $\beta'\neq 0$ or $\esiz
\beta',V'\esde \neq 0$, a similar result holds for $K=1$ frontals in
$\R^3$. The resulting surface will have a singular point along
$\beta$ wherever $\esiz \beta'(u_0),V'(u_0)\esde =0$. We can extract
from here two consequences:

\begin{itemize}
  \item
Any regular analytic curve in $\R^3$ with non-vanishing curvature is
realized as a curve of singularities of a unique $K=1$ frontal in
$\R^3$. This is easily seen taking into account that the metric relations $\esiz \beta',V\esde = \esiz \beta',V'\esde=0$ actually determine the field $V$ from the curve $\beta$.
 \item
If $\beta(u)=a\in \R^3$ and $V(u):I\flecha \S^2$ is real analytic, then there exists a unique $K=1$ frontal $X:\Omega\subset
\C\flecha \R^3$, $\Omega$ an open complex domain containing $I$,
such that $X|_I= a\in \R^3$ and its unit normal along the real axis
is given by $V(u)$.
\end{itemize}

This last consequence suggests a method for constructing isolated
singularities of $K=1$ surfaces in $\R^3$ as solutions to singular
geometric Cauchy problems. This idea was first used in the authors'
previous work \cite{GaMi} on flat surfaces in hyperbolic $3$-space,
and it will be fully exploited for the present case in Section 3.

It must be remarked that the unique solution to the geometric Cauchy
problem for $K=1$ surfaces in $\R^3$ explained above can be
described using loop groups. This is a consequence of the
corresponding result for CMC surfaces by Brander and Dorfmeister
\cite{BrDo}, and the relation between CMC surfaces and $K$-surfaces
as parallel surfaces.

\subsection*{A boundary regularity result}

In the next section we will use several times the following boundary
regularity result for solutions to the Dirichlet problem of
\eqref{eqforX}. The result comes from Jacobowsky's paper
\cite[Theorem 4.1]{Jac} (see also the paper by Brezis and Coron
\cite{BrCo}).

We must point out that the boundary regularity result of \cite{Jac}
is only formulated in the continuous case; however, as explained in
the introduction of \cite{Jac}, the result actually provides higher
regularity at the boundary of the solution provided the boundary
condition also has this higher regularity.

\begin{lem}[\cite{Jac}]\label{jacobo}
Let $\Omega\subset \R^2$ be a bounded domain whose boundary $\parc
\Omega$ is $C^{\8}$. Let $X=(X_1,X_2,X_3):\Omega\flecha \R^3$ be a
solution to the Dirichlet problem $$X_{uu} + X_{vv} = 2 \, X_u \times
X_v \hspace{0.6cm} \text{ in $\Omega$}, \hspace{2cm} X=\varphi
\hspace{0.6cm} \text{ on $\parc \Omega$},$$ where $\varphi\in C^{\8}
(\overline{\Omega},\R^3)$. Assume that $X_k\in C^{\8}(\Omega)\cap
C(\overline{\Omega})$ and also that each $X_k$ lies in the Sobolev
space $ H^1 (\Omega) \equiv W^{1,2}(\Omega)$, for $k=1,2,3$. Then
$X_k \in C^{\8}(\overline{\Omega})$.
\end{lem}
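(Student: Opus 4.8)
The plan is to reduce the equation $X_{uu}+X_{vv}=2X_u\times X_v$ to the form treated by Jacobowsky in \cite{Jac}, namely the $H$-system for prescribed mean curvature surfaces, and then invoke the boundary regularity statement there verbatim. The key observation is that $2X_u\times X_v$ is (up to the constant factor) the expression $2H(X_u\times X_v)$ with $H\equiv 1$, so each component $X_k$ satisfies a semilinear elliptic equation $\Delta X_k = 2(X_i)_u (X_j)_v - 2(X_i)_v(X_j)_u$ whose right-hand side is a sum of Jacobian determinants — precisely the structure of the $H$-surface (or constant-mean-curvature) system to which the Wente--Heinz--Hildebrandt and Jacobowsky regularity theory applies. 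Since we assume $X_k\in C^\8(\Omega)\cap C(\overline\Omega)\cap H^1(\Omega)$ and $\varphi\in C^\8(\overline\Omega,\R^3)$ with $\partial\Omega$ smooth, the hypotheses of \cite[Theorem 4.1]{Jac} are met.

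First I would recall that by the interior elliptic regularity for this system (Wente, Heinz), a weak $H^1$ solution is automatically smooth in $\Omega$, so the assumption $X_k\in C^\8(\Omega)$ is harmless and in fact the natural one; the substantive content is the behavior up to $\partial\Omega$. Next I would invoke \cite[Theorem 4.1]{Jac}, which gives $C^{0,\alpha}(\overline\Omega)$ — more precisely Hölder continuity up to the boundary — for $H^1$ solutions of the $H$-system with continuous boundary data on a $C^2$ domain. Then, as emphasized in the paragraph preceding the lemma (and in the introduction of \cite{Jac}), the boundary regularity bootstraps: once $X$ is known to be Hölder continuous up to $\partial\Omega$, a standard Schauder-type iteration using the smoothness of $\varphi$ and of $\partial\Omega$ upgrades $X_k$ successively to $C^{k,\alpha}(\overline\Omega)$ for every $k$, hence to $C^\8(\overline\Omega)$. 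One locally flattens the boundary, subtracts an extension of $\varphi$, and differentiates the equation tangentially, each differentiation being legitimate because the previous step has already established one more degree of boundary regularity; the quadratic nonlinearity in first derivatives poses no difficulty since products of $C^{k,\alpha}$ functions stay in $C^{k,\alpha}$.

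The main obstacle — and the only genuinely nontrivial point — is the first rung of the ladder: passing from $X\in H^1\cap C(\overline\Omega)$ to $X\in C^{0,\alpha}(\overline\Omega)$ for a system with critical nonlinearity (the right-hand side lies only in $L^1$ a priori, so naive Calderón--Zygmund estimates fail). This is exactly what Jacobowsky's theorem supplies, building on the Wente inequality and the fact that Jacobian determinants of $H^1$ functions lie in the Hardy space $\mathcal H^1$, which compensates for the borderline integrability and yields continuity. Since the paper is entitled to cite \cite{Jac} (and \cite{BrCo}) as a black box, the proof of the lemma consists of this reduction together with the remark that the quoted continuous-boundary-value result self-improves, under the stated smoothness of $\varphi$ and $\partial\Omega$, to $C^\8$ regularity up to the boundary; hence each $X_k\in C^\8(\overline\Omega)$, as claimed.
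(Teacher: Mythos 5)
Your proposal is correct and follows essentially the same route as the paper, which gives no independent proof of this lemma but simply cites \cite[Theorem 4.1]{Jac} together with the remark (from the introduction of \cite{Jac}) that the continuous-boundary-data statement self-improves to higher boundary regularity when $\varphi$ and $\partial\Omega$ are smooth. Your expansion of that remark into an explicit Schauder bootstrap, with Jacobowsky's theorem supplying the critical first step from $H^1\cap C(\overline{\Omega})$ to H\"older continuity up to the boundary, is exactly the intended argument.
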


\section{The classification of isolated singularities}\label{sec:local}

In what follows, let $D\subset \R^2$ denote a disc of center $q$,
and $D^*:= D\setminus \{q\}$ denote its associated punctured disc.

\begin{defi}
Let $\psi:D^*\flecha \R^3$ denote an immersion of a punctured disc
$D^*$ into $\R^3$, and assume that $\psi$ extends continuously to
$D$. Then, the surface $\psi$ is said to have an \emph{isolated
singularity} at $p=\psi (q) \in \R^3$.

If $\psi$ is an embedding around $q$, $p$ will be called an
\emph{embedded isolated singularity}. The singularity is called
\emph{extendable} if $\psi$ and its unit normal $N$ extend smoothly to $D$, and
\emph{removable} if it is extendable and $\psi:D\flecha \R^3$ is an
immersion.
\end{defi}
An example of a non-extendable embedded isolated singularity is
given by the graph of an arbitrary smooth function on a punctured
planar topological disc $\cU^*$ that extends continuously but not
$C^1$ across the puncture. For example, the two isolated singularities of the surface in Figure 1 are non-extendable embedded isolated singularities.

Our aim is to classify locally the isolated singularities of
$K$-surfaces in $\R^3$. For that, we will identify two
$K$-surfaces having some point $p\in \R^3$ as an isolated
singularity if they overlap on some common neighborhood of $p$. This
is a natural identification since $K$-surfaces in $\R^3$ are
real analytic, and our description in this section will be local
around the isolated singularity $p$.

\begin{pro}\label{finito}
Let $\psi:D^*\flecha \R^3$ denote an immersed $K$-surface in
$\R^3$ with an isolated singularity at $p=\psi(q)$. The following
two conditions are equivalent:

\begin{enumerate}
  \item[(i)]
$\psi$ has finite area around $p$.
 \item[(ii)]
$\psi$ has finite total mean curvature (i.e. $\int |H| dA <\8$) around
$p$.
\end{enumerate}
In that case, $p$ is called a \emph{finite isolated singularity}.
\end{pro}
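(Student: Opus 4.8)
The plan is to work with the conformal parameter $z=u+iv$ for the second fundamental form on the punctured disc $D^*$, and to translate the two finiteness conditions into statements about the functions $\mu,\rho$ (equivalently $\omega$) appearing in \eqref{fundam}. By \eqref{fundam} the induced metric is $\esiz dX,dX\esde = Q\,dz^2 + 2\mu|dz|^2 + \bar Q\,d\bar z^2$, so the area form is $dA = \sqrt{\mu^2 - |Q|^2}\,\frac{i}{2}dz\wedge d\bar z = \rho\,\frac{i}{2}dz\wedge d\bar z$, using \eqref{romu}. On the other hand, by \eqref{forH} the mean curvature is $H = \mu/\rho$, so the total mean curvature density is $|H|\,dA = \mu\,\frac{i}{2}dz\wedge d\bar z$. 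Hence, writing everything over a fixed compact exhaustion of $D^*$ near $q$, condition (i) says $\int_{D^*}\rho\,du\,dv<\infty$ and condition (ii) says $\int_{D^*}\mu\,du\,dv<\infty$. Since $0<\rho\le\mu$ pointwise (again from \eqref{romu}), the implication (ii)$\Rightarrow$(i) is immediate, and the whole content is the reverse implication: finiteness of $\int\rho$ forces finiteness of $\int\mu$.

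For the converse, the idea is that the defect $\mu - \rho$ is controlled by $\rho$ itself together with the subharmonicity built into \eqref{lapl}. Concretely, from \eqref{omro} and \eqref{forH} one has $\mu = |Q|\cosh\omega$ and $\rho = |Q|\sinh\omega$, so $\mu - \rho = |Q|e^{-\omega} \le \rho$ whenever $\omega \ge$ some fixed positive constant, i.e. wherever $H=\coth\omega$ is bounded. Thus on the region where $\omega$ is bounded below we already get $\mu \le 2\rho$ and there is nothing to prove; the only danger is the region where $\omega\to 0$, i.e. where $H\to\infty$ and $|Q|\to\infty$. To handle this I would use \eqref{lapl}: $\omega_{z\bar z} = -\rho < 0$, so $\omega$ is superharmonic and positive on $D^*$. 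A positive superharmonic function on a punctured disc with $\int_{D^*}(-\Delta\omega)\,du\,dv = 4\int_{D^*}\rho\,du\,dv < \infty$ (which is exactly hypothesis (i)) has a removable-type structure at the puncture: by the Riesz decomposition, $\omega = h - G\mu_{\mathrm{Riesz}}$ near $q$ with $h$ harmonic, $G$ the Green potential of the finite measure $\rho\,du\,dv$, and one concludes $\omega$ has at worst a logarithmic-or-milder behavior at $q$ — in particular $\omega$ does not tend to $0$ on any sequence approaching $q$ except possibly on a set of controlled size. Then I would bound $\int \mu = \int |Q|\cosh\omega$ by splitting the punctured disc according to the value of $\omega$, using $|Q|\cosh\omega \le \rho + |Q|e^{-\omega}$ and $|Q|e^{-\omega} = \rho\, e^{-\omega}/\sinh\omega$, a quantity which blows up only as $\omega\to 0$ but there with the integrable weight $\rho$ still in front; a Hölder or layer-cake estimate against $\int\rho<\infty$ and the Green-potential control on $\{\omega<\epsilon\}$ should close the argument.

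The main obstacle I expect is precisely the analysis near the puncture in the regime $\omega\to 0$: making rigorous that $\int_{D^*}\rho<\infty$ prevents $|Q|=\rho/\sinh\omega$ from being so large (on a set that is not small enough) that $\int|Q|e^{-\omega}$ diverges. Morally $\int\rho<\infty$ is a strong constraint because $-\Delta\omega=\rho$ and $\omega>0$, so $\rho$ cannot concentrate too badly; but turning this into the clean pointwise/integral bound on $|Q|e^{-\omega}$ is the delicate step. An alternative, possibly cleaner route, is to argue directly with the parallel CMC surfaces $f = X+N$ and $f^- = X-N$: their area forms are $(\mu\pm\rho)\,\frac{i}{2}dz\wedge d\bar z$ and $(\mu\mp\rho)\cdots$, and $\mathrm{Area}(f) + \mathrm{Area}(f^-) = 2\int\mu$, while each of $f$ and $f^-$ has bounded mean curvature $1/2$; one can then invoke a finite-area/finite-total-curvature equivalence for bounded-mean-curvature fronts, or a monotonicity-type argument, to bound $\int\mu$ by $\int\rho$ plus the area of the regular part. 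I would develop the superharmonicity argument as the primary line and keep the CMC-parallel-surface viewpoint in reserve as a cross-check.
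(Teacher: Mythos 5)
Your reduction of the statement to $\int_{D^*}\rho\,du\,dv<\infty\Rightarrow\int_{D^*}\mu\,du\,dv<\infty$ in a conformal parameter for $II$, and the easy implication $(ii)\Rightarrow(i)$ via $\rho\le\mu$, are both fine. But the converse direction --- the entire content of the proposition --- is not proved. The step you yourself flag as ``the delicate step'' is exactly the one that is missing: you need $\int |Q|\,e^{-\omega}<\infty$, i.e.\ $\int \rho\coth\omega<\infty$, on the region where $\omega\to 0$, and nothing in the sketch delivers it. The Riesz decomposition of the positive superharmonic function $\omega$ with finite Riesz mass $\int\rho$ gives structural information about $\omega$, but it gives no lower bound on $\omega$ in terms of $\rho$, which is what an estimate of $\int\rho/\omega$ requires; a priori $\omega$ could vanish fast on regions carrying a non-negligible share of the mass of $\rho$, and the proposed ``H\"older or layer-cake estimate'' does not rule this out. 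Worse, the guiding heuristic that ``$\omega$ does not tend to $0$ on any sequence approaching $q$ except possibly on a set of controlled size'' is false in precisely the interesting case: for a non-extendable singularity the extrinsic conformal structure is an annulus (Theorem \ref{mainth1}), the singularity corresponds to an entire inner boundary circle, and $\omega$ (equivalently $\rho$) tends to $0$ along \emph{all} of it, so $H=\coth\omega\to\infty$ everywhere at the singularity. The reserve plan via the parallel CMC surfaces has the same problem in disguise: $\mathrm{Area}(f)+\mathrm{Area}(f^-)$ is a constant multiple of $\int|H|\,dA$, so bounding it is the original problem, and monotonicity formulas for bounded-mean-curvature surfaces give area lower bounds, not the upper bound you need.

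For comparison, the paper avoids the conformal/potential-theoretic route entirely and argues extrinsically: from $\lap^{II}\psi=2N$ one gets $\lap^{II}\bigl(\tfrac12\|\psi-p\|^2\bigr)=2(H+\esiz N,\psi-p\esde)$, and integrating this over the region between two level spheres $\S^2(p,r_1)$ and $\S^2(p,r_2)$ (regular levels by Sard), the divergence theorem plus the observation that the outward normal derivative of $\tfrac12\|\psi-p\|^2$ is nonpositive on the inner level set yields $\int H\,dA\le \tfrac12 A_0(r_1)+r_1\,\cA(D^*)$, uniformly in $r_2$; letting $r_2\to0$ finishes. The finite-area hypothesis enters only through the harmless term $r_1\,\cA(D^*)$, and no pointwise control of $H$ near the singularity is ever needed. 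If you want to complete a proof along your lines you would have to produce a genuinely new quantitative estimate near $\{\omega=0\}$; as written, the argument has a gap at its central step.
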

\begin{proof}
From $H^2-K\geq 0$ and $K=1$, we clearly see that $(ii)$ implies
$(i)$.

Conversely, assume that $\psi$ has finite area around $p$. As we
already said, we can reverse orientation if necessary to assume that
$H\geq 1$. From \eqref{eqforX} and \eqref{fundam} we see that (see
also \cite{GaMa})
$$\lap^{II} \psi= 2N,$$ where $\lap^{II}$ denotes the Laplacian for
the second fundamental form $II$, which is a Riemannian metric.
Therefore, if we consider the function $f=\frac{1}{2}\esiz \psi-p,\psi-p\esde:
D^*\flecha \R$, a simple computation from \eqref{fundam} using that $H=\mu/\rho$ gives
 \begin{equation}\label{jafor1}
\lap^{II} f = 2(H+ \esiz N,\psi-p\esde).
 \end{equation}
Now, by Sard theorem, we know that for almost all values of $r>0$
(sufficiently small), the sphere of $\R^3$ centered at $p$ and of
radius $r>0$ meets $\psi (D^*)$ along a regular, not necessarily
connected, curve. In that way, we may take values $r_1>r_2>0$ for
which the intersection of $\psi(D^*)$ with $\S^2(p,r_1)$ and
$\S^2(p,r_2)$ is regular in the above sense. Also, by making $r_1$
small enough, we may assume that $\psi(\parc D)$ lies outside
$\S^2(p,r_1)$.

Consider next the domain $$\Omega_{r_1,r_2}=\{q\in D^* : r_1\geq
||\psi(q)-p||\geq r_2\}$$ and $C_{r_i}=\{q\in D^* : ||\psi
(q)-p||=r_i\}$. Using then \eqref{jafor1} and the fact that, by
\eqref{fundam}, the area elements $dA$ and $dA_{II}$ for $I$ and
$II$ agree, we have
\begin{equation}\label{jaforunm}
\int_{\Omega_{r_1,r_2}} \lap^{II} f \ dA_{II} = 2
\int_{\Omega_{r_1,r_2}} (H+ \esiz N,\psi-p\esde) \, dA_{II} = 2
\int_{\Omega_{r_1,r_2}} (H+ \esiz N,\psi-p\esde)\,  dA.
\end{equation}
On the other hand, by the divergence theorem,
 \begin{equation}\label{jafor2}
\int_{\Omega_{r_1,r_2}} \lap^{II} f \, dA_{II}= \int_{C_{r_1}}
II(\nabla^{II} f, \nu) ds_{II} + \int_{C_{r_2}} II(\nabla^{II} f,
\nu) ds_{II}.
 \end{equation}
Here $\nabla^{II}$ (resp. $ds_{II}$) is the gradient (resp. the
boundary arc-length) with respect to the Riemannian metric $II$, and
$\nu$ denotes the exterior unit normal for the domain
$\Omega_{r_1,r_2}$, again with respect to $II$. The first term in
the right hand side of \eqref{jafor2} is not interesting for what follows: call it $A_0(r_1)$. The
second term is actually $\int_{C_{r_2}} \nu (f) \, ds_{II}$.

We claim now that $\nu (f) \leq 0$ along $C_{r_2}$. For that, we
only need to observe that, as $\nu$ is the \emph{exterior} unit normal, for each $x\in C_{r_2}$
there is a curve $\alfa(t)$ on $\Omega_{r_1,r_2}\subset D^*$ such
that $\alfa(0)=x$ and $\alfa'(0)= \nu(x)$. Hence, $\nu(f)(x)=
(f\circ \alfa)'(0)$ and as $(f\circ \alfa)(t)\geq r_1 = (f\circ
\alfa)(0)$, we obtain $(f\circ \alfa)'(0)\leq 0$, as wished.

In addition, $$\left| \int_{\Omega_{r_1,r_2}} \esiz N,\psi -p\esde
\, dA \right| \leq \int_{\Omega_{r_1,r_2}} ||\psi - p || \, dA \leq
r_1 \, \cA(\Omega_{r_1,r_2})\leq r_1 \, \cA(D^*),$$ where $\cA$
stands for the area. Putting together this inequality with
\eqref{jaforunm} and \eqref{jafor2} we see that
$$\int_{\Omega_{r_1,r_2}} H \, dA \leq \frac{1}{2} A_0 (r_1)+ r_1 \,
\mathcal{A} (D^*).$$ As the right hand side of this inequality does
not depend on $r_2$, by making $r_2\to 0$ (through an adequate
subsequence, so that the above explained regularity in the
intersection with a ball holds) we obtain that $\int_{D^*} H \, dA $
is finite, as wished.

\end{proof}

It is clear that there exist isolated singularities in $\R^3$ of
infinite area and infinite total mean curvature around the
singularity. However, we do not know if, in the case of $K$-surfaces in $\R^3$, all isolated singularities are actually finite.

\subsection*{Characterization of extendable isolated singularities}

Let us recall that an isolated singularity of a surface $\psi:D^*
\flecha \R^3$ is called \emph{extendable} if both $\psi$ and its
unit normal $N$ extend smoothly across the singularity. This means
that the surface has a well defined tangent plane at the
singularity, although it could be non-regular at it. The following
result characterizes extendable singularities of $K$-surfaces. It
generalizes theorems by Beyerstedt \cite{Bey} and by Heinz and Beyerstedt \cite{HeBe} for the
case of graphs satisfying \eqref{Kequation} on a punctured disk.

\begin{teo}\label{mainth1}
Let $\psi:D^*\flecha \R^3$ be an immersed $K$-surface with an
isolated singularity at $p=\psi(q)$. The following conditions are
equivalent.
 \begin{enumerate}
   \item[(i)]
The isolated singularity $p$ is extendable.
 \item[(ii)]
The mean curvature of $\psi$ is bounded around the singularity.
 \item[(iii)]
$\psi$ has around the singularity the extrinsic conformal structure
of a punctured disk.
\item[(iv)]
The singularity $p$ is removable, or it is a branch point.
 \end{enumerate}
\end{teo}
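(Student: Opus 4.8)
The plan is to translate each of the four conditions into a statement about the harmonic Gauss map $N$ and the extrinsic conformal structure, and then to close the cycle $(iv)\Rightarrow(i)\Rightarrow(ii)\Rightarrow(iii)\Rightarrow(iv)$. The structural fact underlying everything is that, near $q$, the extrinsic conformal structure is the germ of either a punctured disc or a round annulus $\{r<|w|<1\}$ of finite modulus; in the second case $q$ gets ``blown up'' to an entire circle, and this will turn out to be exactly the non-extendable case. The implication $(iv)\Rightarrow(i)$ is immediate: if $p$ is removable then $\psi$ is by definition a smooth immersion on $D$, while if $p$ is a branch point then, in a conformal parameter, $N$ extends smoothly with the local normal form $w\mapsto w^{k}$ and, integrating \eqref{forbas}, $X$ extends smoothly with $dX(q)=0$; in either case $p$ is extendable.

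For $(i)\Rightarrow(ii)$: assume $\psi$ and $N$ extend smoothly to $D$. If $\psi$ is an immersion at $q$, then $H$ is continuous on $D$, hence bounded near $q$; so suppose $d\psi(q)$ has rank $\le1$. By \eqref{fundam} the immersion $\psi$ and its Gauss map $N$ are singular on the same set, so $q$ is an isolated point at which $N$ fails to be a local diffeomorphism. By the local structure of harmonic maps into $\S^{2}$ recalled in Section \ref{sec:prelim} (see \cite{Woo}), the only alternative to $q$ being a branch point of $N$ involves a regular curve of points where $dN$ has rank one passing through $q$ --- impossible, since $dN$ has rank two on $D^{*}$. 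So $q$ is a branch point of $N$, and it remains to bound $H=\coth\omega$ there. Writing $N$ through a stereographic chart $g$ of $\S^{2}$ one gets $|Q|/\mu=2|g_{z}||g_{\bar z}|/(|g_{z}|^{2}+|g_{\bar z}|^{2})$; near the branch point the leading part of $g$ is $w\mapsto w^{k}$ composed with an orientation-preserving diffeomorphism fixing the origin, whose dilatation $|g_{\bar z}|/|g_{z}|$ tends to a value strictly below $1$. Hence $\rho/\mu=\sqrt{1-(|Q|/\mu)^{2}}$ is bounded away from $0$, i.e. $\omega$ is bounded away from $0$, and $H$ is bounded.

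For $(ii)\Rightarrow(iii)$: if $H\le C$ near $q$ then the principal curvatures ($\kappa$ and $1/\kappa$, with $\kappa+1/\kappa=2H$) are bounded, so the monotonicity formula for surfaces of bounded mean curvature gives that $\psi$ has finite area near $p$; by \eqref{fundam} this is equivalent to $N$ having finite energy near $q$, and then $X\in H^{1}$ near $q$. Now suppose the extrinsic conformal structure near $q$ were that of a finite annulus $\{r<|w|<1\}$, with $q$ corresponding to the inner circle $\{|w|=r\}$. Applying Lemma \ref{jacobo} to \eqref{eqforX} on this annulus --- the boundary data being smooth on $\{|w|=1\}$ and the constant value $p$ on $\{|w|=r\}$ --- we get that $X$ and $N$ extend smoothly up to $\{|w|=r\}$, where $X\equiv p$. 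Then the tangential derivative of $X$ along $\{|w|=r\}$ vanishes, so by \eqref{forbas} the corresponding derivative of $N$ vanishes there, and a short computation from \eqref{fundam} yields $|Q|=\mu$, i.e. $\rho=0$ and $\omega=0$, along $\{|w|=r\}$. Hence $H=\coth\omega\to+\infty$ as one approaches this circle, contradicting (ii); therefore the extrinsic conformal structure near $q$ is a punctured disc.

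For $(iii)\Rightarrow(iv)$: now $N$ is a harmonic map from a punctured disc into $\S^{2}$, and the first task is to show it has finite energy near the puncture. This is the delicate point: I expect it to follow from the continuity of $\psi$ together with the punctured-disc structure, via Lemma \ref{jacobo} applied to \eqref{eqforX} on suitable subdomains, paralleling the graph case treated by Beyerstedt \cite{Bey} and Heinz--Beyerstedt \cite{HeBe}. Granting finite energy, the classical removable singularity theorem for harmonic maps makes $N$ extend smoothly across the puncture, and then $X$ extends smoothly by \eqref{forbas}; the Wood alternative from $(i)\Rightarrow(ii)$ then forces $q$ to be a removable point or a branch point, which is (iv). I expect the main obstacle to be precisely this finiteness and boundary-regularity input: with no a priori finiteness hypothesis one must control both the conformal type of the end and the energy of $N$ near $q$, and it is here that the monotonicity formula (when $H$ is bounded) and Jacobowsky's boundary regularity for \eqref{eqforX} --- together with the reduction to the Heinz--Beyerstedt graph setting --- do the real work; the remaining steps are elementary or direct consequences of \eqref{forbas}--\eqref{fundam}.
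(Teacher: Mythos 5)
Your cycle $(iv)\Rightarrow(i)\Rightarrow(ii)\Rightarrow(iii)\Rightarrow(iv)$ has two genuine gaps, and they sit exactly where the paper does its real work. First, in $(i)\Rightarrow(ii)$ you invoke Wood's local structure theory for harmonic maps ``at $q$'' to conclude that $q$ is a branch point of $N$. But $N$ is harmonic only with respect to the \emph{extrinsic} conformal structure, and whether $q$ corresponds to a single interior point of that structure --- rather than being blown up to the inner circle of a finite-modulus annulus, as happens for the rotational peaked spheres --- is precisely assertion (iii), which is not available at that stage of your cycle. The smooth extension of $\psi$ and $N$ in the $D$-coordinates does not by itself give a conformal parameter for $II$ centered at $q$. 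The paper's first (and hardest) step is exactly $(i)\Rightarrow(iii)$: smoothness of $\langle d\psi,d\psi\rangle$ on $D$ gives finite area, Proposition \ref{finito} gives finite total mean curvature, hence $X\in H^1$; Lemma \ref{jacobo} then extends $X$ smoothly to the inner circle of the putative annulus, where $X$ is constant and $N$ is constant (by extendability), so \eqref{forbas} forces $dX\equiv 0$ along that circle and Cauchy uniqueness for \eqref{eqforX} makes $X$ constant, a contradiction. Your argument never rules out the annulus. Moreover, your bound on $H$ near a branch point rests on the unproved assertion that the dilatation $|g_{\bar z}|/|g_z|$ has a limit strictly below $1$ there; the paper replaces this with a clean maximum-principle argument (see below).

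Second, in $(iii)\Rightarrow(iv)$ you explicitly leave the finite-energy input as something you ``expect'' to follow; this cannot be outsourced to the Heinz--Beyerstedt graph case, and it is the content of the paper's implication $(iii)\Rightarrow(ii)$. The argument is: on the punctured disc the function $\omega$ of \eqref{omro} is positive and satisfies $\omega_{z\bar z}=-|Q|\sinh\omega\le 0$, so it is superharmonic; comparing it on the annuli $\{r_n\le|z|\le 1\}$ with the harmonic functions $h_n$ equal to $\omega_0:=\min_{|\zeta|=1}\omega>0$ on $\S^1$ and to $0$ on $\{|z|=r_n\}$, and letting $r_n\to 0$, yields $\omega\ge\omega_0$ everywhere, hence $H=\coth\omega\le\coth\omega_0$. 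Only then does one get finite area (via Harvey--Lawson), hence finite energy of $N$, Helein's regularity to extend $N$ harmonically across the puncture, and Wood's alternative to conclude (iv). Without this step your cycle does not close: a harmonic map on a punctured disc need not have finite energy. Your $(ii)\Rightarrow(iii)$ step, by contrast, is essentially sound and close to the paper's treatment of the annulus case (one should only note that $\mu=|Q|$ on the inner circle forces $Q$, which is holomorphic, either to vanish identically --- giving a sphere, which is impossible --- or to be nonzero somewhere on the circle, where $H\to\infty$).
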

\begin{proof}
We show first that $(i)$ implies $(iii)$, arguing by contradiction.
For that, assume that the singularity is extendable, but that its
extrinsic conformal structure is that of an annulus. We denote this
conformal parametrization of the surface by $X$, and the conformal
coordinate of the annulus $\A$ by $z=u+iv$.

As $\esiz d\psi,d\psi\esde$ is a smooth quadratic form on $D$, it is
clear that $\psi$ has finite area. So, by Proposition \ref{finito}
and equation \eqref{fundam},
$$\int_{D^*} H \, dA = \int_{\A} \frac{\mu}{\rho} \, \rho \, du dv =
\int_{\A} (\esiz X_u,X_u\esde + \esiz X_v,X_v\esde ) du dv <\8.$$
Now, since $X$ satisfies \eqref{eqforX}, Lemma \ref{jacobo} shows
that $X(u,v)$ can be extended smoothly to the
boundary of $\A$. But by hypothesis, $N$ can also be smoothly extended to the boundary, and it is constant on this
boundary. This implies from \eqref{forbas} and the fact that $X$ is
constant that $dX$ vanishes identically along the boundary of the
annulus. But that would imply from the uniqueness in the solution to
the Cauchy problem for \eqref{eqforX} that $X$ is constant, a
contradiction.

We prove next that $(iii)$ implies $(ii)$. Assume that the extrinsic
conformal structure is that of the punctured disk $\D^*$. We may
assume without loss of generality that the surface is smooth on
$\parc \D\equiv \S^1$. Let $\omega:\D^*\flecha (0,\8]$ be given by
\eqref{omro}. We are going to prove that for every $z\in \D^*$ it
holds $$\omega(z)\geq {\rm min} \{ \omega (\zeta): |\zeta|=1\} >0.$$
This implies from \eqref{forH} that $H$ is bounded on $\D^*$, as
wished.

In order to prove the above inequality, denote $\omega_0 ={\rm min}
\{ \omega (\zeta): |\zeta|=1\}$. Let $\{r_n\}$ be a strictly
decreasing sequence of real numbers $r_n \in (0,1)$, tending to $0$.
Let $h_n$ denote the unique harmonic function on the annulus $$\A_n=
\{z\in \C : r_n \leq |z|\leq 1\},$$ with the Dirichlet conditions
$h_n=\omega_0$ on $\S^1$ and $h_n=0$ on $\{\zeta : |\zeta|=r_n\}$.
By \eqref{lapl}, we see that $\omega_{z\bar{z}}\leq 0$. Thus, by the
maximum principle applied to $\omega$ and $h_n$ we get that
 \begin{equation}\label{4esja}
 0\leq h_n(z) \leq h_{n+1} (z) \leq \omega(z), \hspace{0.7cm} \text{
for every $n\in \N$ and $z\in \C$ with } r_n\leq |z|\leq 1.
 \end{equation}
Thereby, we see that $\{h_n\}$ is an increasing sequence of harmonic
functions, bounded from above by $\omega_0<\8$. So, they converge to
some harmonic function $h$ on $\D^*\cup \S^1$ which is constantly
equal to $\omega_0$ on $\S^1$. But as $h$ is bounded, we deduce that
$h(z) \equiv \omega_0$ on $\overline{\D}$. So, from \eqref{4esja} we
get $\omega(z)\geq \omega_0$ for every $z\in \overline{\D}$, as
desired. This shows that $(iii)$ implies $(ii)$.

In order to prove that $(ii)$ implies $(iv)$, we first remark that
by \cite{HaLa}, if the surface has bounded mean curvature, then it
has finite area. Thus, arguing as in the proof of $(i)\Rightarrow
(iii)$ we have two possibilities:

 \begin{enumerate}
   \item
If the surface has the extrinsic conformal structure of the
punctured disk $\D^*$ , then the finite area condition shows that
$$\int_{\D^*} (\esiz N_u,N_u\esde + \esiz N_v,N_v\esde) \, du dv <\8,$$ i.e.
$N\in \H^1 (\D^*,\S^2)\equiv W^{1,2} (\D^*,\S^2)$. So, by Helein's
regularity theorem \cite{Hel} for harmonic maps into $\S^2$, $N$ can
be harmonically extended to $\D$. The surface $X$ is then extended
accordingly by means of \eqref{forbas}. If $dN$ is non-singular at
$0$, then $X$ is immersed at $0$, and so the singularity is
removable. In contrast, if $dN$ is singular at $0$, a result by Wood
\cite{Woo} gives that, as $dN$ is non-singular in $\D^*$, $N$ has a branch point at $z_0$. Thus, $(iv)$ holds.
 \item
If the surface has the extrinsic conformal structure of an annulus
$\A$, as we explained previously, we may use Lemma \ref{jacobo} to
extend $X(u,v)$ to its inner boundary, so that $X$ is constant
there. But by \eqref{fundam} we get then that $\mu^2 -|Q|^2 =0$ on
this boundary. And as
$$\frac{|Q|^2}{\mu^2-|Q|^2} = H^2 -1 <\8$$ on $\A$, we deduce that $Q$
vanishes on this boundary curve. Thus $Q=0$ everywhere, i.e. the
surface is a piece of a round sphere with the extrinsic conformal
structure of an annulus and that is constant on a boundary curve of
the annulus. This is impossible, and rules out this second case.
 \end{enumerate}
Hence, $(ii)\Rightarrow (iv)$ holds. Finally, that $(iv)$ implies
$(i)$ is immediate.
\end{proof}

\subsection*{The classification of immersed conical singularities in $\R^3$}

We study next the space of non-extendable finite isolated singularities of $K$-surfaces in $\R^3$. Let us point out that, by Theorem \ref{mainth1}, the extrinsic conformal structure around such a singularity is that of an annulus.

We shall use the notation $\A_r= \{z\in \C: 1<|z|<r\}$, where $r>1$. We also need to introduce the following class of curves with singularities in $\S^2$.

\begin{defi}\label{admiscusp}
A smooth map $\alfa:I\subset \R\flecha \S^2\subset \R^3$ is called a \emph{locally convex curve with admissible cusps} if, for every $s\in I$, the quantity $||\alfa'(s)|| k_{\alfa} (s)$ is a non-zero real number. Here $k_{\alfa}(s)$ is the geodesic curvature of $\alfa$ in $\S^2$, i.e. $$k_{\alfa} (s)= \frac{\esiz \alfa''(s),J\alfa'(s)\esde}{||\alfa'(s)||^3}$$ where $J$ denotes the complex structure of $\S^2$.
\end{defi}
It is clear that any regular locally convex curve in $\S^2$ satisfies this property. Indeed, for regular points of
the curve, the condition of the definition is just that $k_{\alfa}\neq 0$, i.e. that $\alfa$ is locally convex
around the point. Let us explain what happens locally around a singular point, i.e. a point $s_0$ with $\alfa'(s_0)=0$.
By definition, we must have
 \begin{equation}\label{limite}
 \lim_{s\to s_0} ||\alfa'(s)|| k_{\alfa} (s)= c_0 \in \R\setminus\{0\},
 \end{equation}
so in particular $k_{\alfa}\to \pm \8$ when $s\to s_0$.

Assume without loss of generality that $\alfa(s_0)\in \S_+^2$, and
let $\pi:\S_+^2\flecha \R^2$ denote the totally geodesic embedding
of $\S_+^2$ into $\R^2$, given by $$\pi (x_1,x_2,x_3)=
\left(\frac{x_1}{x_3},\frac{x_2}{x_3}\right).$$ If we let
 \begin{equation}\label{defbet}
\beta:=\pi \circ \alfa,
 \end{equation}
then $\beta'(s_0)=0$ and $||\beta'(s)|| \kappa_{\beta} (s) \to c\neq
0$ as $s\to s_0$. Here $\kappa_{\beta}$ stands for the curvature of
$\beta$ in the plane. Taking the Taylor series of $\beta$ around
$s_0$ and up to a rotation of $\R^2$ around the origin, it is easy
to see that for the above limit to exist, we must have

\begin{equation}\label{limbe}
 \beta(s)= \left(a (s-s_0)^k, b (s-s_0)^{k+1}\right) + \text{
higher order terms },
\end{equation}
where $a,b\in \R$ are non-zero, and $k\geq 2$. So, equations \eqref{defbet},\eqref{limbe} give the shape that the curve $\alfa$ must have around an admissible cusp point $s_0$.

\begin{teo}\label{mainth2}
Let $\alfa:\S^1\flecha \S^2$ denote a closed, real analytic, locally convex curve with admissible cusps in $\S^2$. Then:
 \begin{enumerate}
 \item[i)]
There exists a unique harmonic map $N:\Omega\flecha \S^2$ from an
open set $\Omega\subset \C$ containing $\S^1$ into $\S^2$, satisfying the initial conditions
 \begin{equation}\label{incon2}
   N|_{\S^1} = \alfa, \hspace{2cm} \left.\frac{\parc N}{\parc {\bf n}}\right|_{\S^1}=0,
 \end{equation}
where $\parc /\parc {\bf n}$ stands for the normal derivative along $\S^1$.
 \item[ii)]
If $X:\Omega\flecha \R^3$ is the map given in terms of $N$ by the
representation formula \eqref{forbas}, then $X$ is single valued, $X(\S^1)=p$ for some $p\in \R^3$, and for $r>1$ sufficiently close to $1$ the restriction of $X$ to $\A_r$ is an immersion.

In particular, $X:\A_r\flecha \R^3$ is an immersed $K$-surface having $p\in \R^3$ as a non-extendable finite isolated singularity.
 \item[iii)]
Conversely, let $\mathcal{S}$ denote an immersed $K$-surface with
a non-extendable finite isolated singularity at $p\in \R^3$. Then, $\cS$ is one of the
surfaces constructed above.
\end{enumerate}
As a consequence, there exists a correspondence between the space of
immersed surfaces with $K=1$ in $\R^3$ having $p\in \R^3$ as a non-extendable finite
isolated singularity and the class of closed, real analytic, locally convex curves with admissible cusps in $\S^2$.
\end{teo}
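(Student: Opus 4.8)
The plan is to prove the three parts in order, using the geometric Cauchy problem from Section \ref{sec:prelim} and Theorem \ref{mainth1} as the main tools.

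For part (i), the idea is to solve the Cauchy problem for the harmonic map equation $N_{z\bar z} + \langle N_z, N_{\bar z}\rangle N = 0$ (equivalently, the Dirichlet-energy critical system), prescribing $N = \alfa$ and $\partial N/\partial \mathbf{n} = 0$ on $\S^1$. Since $\alfa$ is real analytic and the harmonic map equation is a real-analytic (elliptic, hence non-characteristic on $\S^1$) second-order system, the Cauchy--Kovalevsky theorem yields a unique real-analytic solution $N$ on a complex neighborhood $\Omega$ of $\S^1$; uniqueness is part of the Cauchy--Kovalevsky conclusion. One should observe that the constraint $|N|=1$ is preserved: the function $|N|^2 - 1$ satisfies a linear second-order equation with vanishing Cauchy data on $\S^1$, so it vanishes identically (this is the standard argument that the harmonic-map-into-sphere system is compatible). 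So $N:\Omega \to \S^2$ genuinely lands in the sphere.

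For part (ii), define $X$ by \eqref{forbas}. First one must check $X$ is single valued: the $1$-form $dX = (N\times N_v)\,du - (N\times N_u)\,dv$ is closed precisely when $N$ is harmonic (this is exactly the integrability giving \eqref{eqforX}), so its integral over the generator of $\pi_1(\A_r)$ — which is homotopic to $\S^1$ — must be shown to vanish. Along $\S^1$ we have $\partial N/\partial\mathbf{n}=0$, so in polar-type coordinates the tangential data of $dX$ restricted to $\S^1$ is controlled by $N_{\text{tangential}}$ alone; in fact $X$ is constant along $\S^1$ because $dX$ restricted to $\S^1$ involves $N\times N_{\mathbf n} = 0$ paired appropriately — more precisely, writing $z = e^{\sigma + it}$, the hypothesis $N_\sigma|_{\S^1}=0$ forces $X_t|_{\S^1} = 0$ via \eqref{forbas}, so $X$ is constant $\equiv p$ on $\S^1$; this simultaneously kills the period and shows $X(\S^1)=p$. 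Next, that $X$ is an immersion on $\A_r$ for $r$ close to $1$: the singular set of $X$ is where $dN$ has rank $\le 1$, i.e.\ where $\mu = |Q|$, equivalently (by \eqref{forH}) where $\omega = 0$; but the admissible-cusp condition \eqref{limite} is exactly the statement that $\|\alfa'\| k_\alfa \ne 0$ on $\S^1$, and a computation relating this quantity to the data of $N$ along $\S^1$ shows $\omega$ is bounded away from $0$ on $\S^1$ (at a genuine cusp of $\alfa$, $\|\alfa'\|\to 0$ but $k_\alfa\to\infty$ so that $\mu/\rho = \coth\omega$ stays finite and $>1$); by continuity $\omega>0$, hence $dN$ is an immersion, on a one-sided neighborhood $\A_r$. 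That $p$ is a non-extendable isolated singularity then follows from Theorem \ref{mainth1}: the extrinsic conformal structure is that of the annulus $\A_r$ (not a punctured disk, since $X$ is constant on one boundary component while the surface is non-constant), so by the equivalence $(iii)\Leftrightarrow(i)$ the singularity is not extendable; and it has finite area, hence is a finite isolated singularity, because $\int H\, dA = \int_{\A_r}(\|X_u\|^2 + \|X_v\|^2)\,du\,dv$, which is finite by the real-analytic (hence smooth) extension of $X$ to $\overline{\A_r}$ near $\S^1$ together with smoothness in the interior.

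For part (iii), the converse, start with an immersed $K$-surface $\cS$ with a non-extendable finite isolated singularity at $p$. By Theorem \ref{mainth1}, its extrinsic conformal structure near the singularity is that of an annulus, so we may represent $\cS$ as $X:\A_{r}\to\R^3$ conformal for $II$, with the puncture corresponding to the inner boundary $\S^1$. Since $\cS$ has finite area (it is a finite singularity), $\int H\, dA = \int(\|X_u\|^2+\|X_v\|^2) < \infty$, so $X$ satisfies \eqref{eqforX} with finite Dirichlet energy on $\A_r$; by Lemma \ref{jacobo} (after noting that the inner boundary values of $X$ exist and are constant $=p$, since $\psi$ extends continuously to the puncture) $X$ extends smoothly up to $\S^1$. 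Then $N = (X_u\times X_v)/\|X_u\times X_v\|$ extends as a harmonic map to $\overline{\A_r}$, and one shows (from $X \equiv p$ constant on $\S^1$ together with \eqref{forbas}) that $\partial N/\partial\mathbf n = 0$ on $\S^1$: indeed $X_t|_{\S^1} = 0$ gives, via \eqref{forbas}, $N\times N_\sigma = 0$ there, and $\langle N, N_\sigma\rangle = 0$ always, so $N_\sigma|_{\S^1}=0$. Set $\alfa = N|_{\S^1}$. By real analyticity of harmonic maps into $\S^2$ and the unique-continuation / Cauchy--Kovalevsky argument of part (i), $N$ — hence $\cS$ — is determined by $\alfa$, and it coincides with the surface built in (ii). Finally $\alfa$ is closed, real analytic, and locally convex with admissible cusps: closedness and real analyticity are clear, and the admissible-cusp condition is forced because $\omega = \text{arccoth}(H) > 0$ (the singularity being non-extendable means $H$ is unbounded is \emph{not} what we need; rather $H>1$ and $\omega$ finite on $\S^1$), which translates — reversing the computation of part (ii) — into $\|\alfa'\| k_\alfa \ne 0$ everywhere on $\S^1$. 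Assembling (i)--(iii) gives the claimed bijection.

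The main obstacle I anticipate is the precise dictionary between the analytic data of $N$ along $\S^1$ (namely $N|_{\S^1} = \alfa$ with $N_{\mathbf n}|_{\S^1}=0$) and the geometric quantity $\|\alfa'(s)\| k_\alfa(s)$, and showing this equals, up to a nonzero factor, the quantity $\mu - |Q|$ or equivalently controls $\omega$ on $\S^1$ — this is where the ``admissible cusp'' shape \eqref{defbet}--\eqref{limbe} must be matched to the behaviour of the front $X$, and where one must verify that $\omega$ does \emph{not} degenerate to $0$ at a cusp of $\alfa$. The closedness/period-vanishing of $X$ and the application of Lemma \ref{jacobo} are comparatively routine once the boundary behaviour $X|_{\S^1}\equiv p$, $N_{\mathbf n}|_{\S^1}=0$ is established.
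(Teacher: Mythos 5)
Your overall architecture matches the paper's: Cauchy--Kovalevsky for part (i), the representation formula plus constancy of $X$ on $\S^1$ for single-valuedness in (ii), and Theorem \ref{mainth1} plus Lemma \ref{jacobo} for (iii). But the step you yourself flag as ``the main obstacle'' is in fact the heart of the proof, and your surrounding discussion of it contains a genuine error. On the singular curve $\S^1$ one has $\rho=0$, hence $\omega=0$ identically and $H=\coth\omega=+\8$ there (this is exactly why the singularity is non-extendable, by Theorem \ref{mainth1}(ii)); so your claims that ``$\omega$ is bounded away from $0$ on $\S^1$'' and that ``$\mu/\rho=\coth\omega$ stays finite and $>1$'' at a cusp are false and, if they held, would contradict non-extendability. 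What the paper actually proves is the identity $\omega_v(u,0)=\|\alfa'(u)\|\,k_{\alfa}(u)$ (equation \eqref{omegav}), obtained by computing $\esiz \alfa'',J\alfa'\esde=\rho_v$ from \eqref{forbas} and combining it with $\rho_v(u,0)=\|\alfa'(u)\|^2\,\omega_v(u,0)$. It is the nonvanishing of this \emph{normal derivative} of $\omega$ along the singular curve that forces $\omega\neq0$, hence regularity of $X$, on a one-sided neighborhood in (ii); and in (iii) the nonvanishing is established by a nodal-line argument: if $\nabla\omega(u_0)=0$ at a point of the zero set $\R$ of the solution $\omega$ of the elliptic equation \eqref{lapl}, then at least two nodal curves of $\omega$ cross at $u_0$, producing singular points of $X$ off $\R$ and contradicting that the singularity is isolated. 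None of this is in your proposal.

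A second gap is in (iii): after Lemma \ref{jacobo} extends $X$ smoothly to the inner boundary, you set $N=(X_u\times X_v)/\|X_u\times X_v\|$ and assert it extends as a harmonic map to the closed annulus. But $X_u\times X_v$ vanishes identically on the singular curve, so the normalization is $0/0$ exactly where you need it. The paper gets around this by first extending $X$ by the odd reflection $X(u,-v)=-X(u,v)$ and then factoring $X_u\times X_v=\xi\,F$ with $\xi$ real analytic, odd in $v$ and vanishing on $\R$, and $F$ real analytic, even in $v$ and nonvanishing on $\R$; only then is $N=F/\|F\|$ well defined and analytic across the singular curve. Your subsequent deduction that $N_{\bf n}|_{\S^1}=0$ is fine once this is in place, but without it the limit normal $\alfa$ is not even defined. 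In summary: the skeleton is right, but the two computations that make the correspondence with admissible-cusp curves actually work --- the identity \eqref{omegav} together with the nodal-curve argument, and the analytic extension of $N$ across the singular curve --- are missing, and your heuristic about the behaviour of $\omega$ on $\S^1$ points in the wrong direction.
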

 \begin{proof}
We shall identify in the usual way $\S^1$ with $\R /(2\pi \Z)$ and $\C\setminus\{0\}$ with $\C/(2\pi \Z)$. It follows then from the
Cauchy-Kowalevsky theorem applied to the equation for harmonic maps
into $\S^2$ that there exists a harmonic map $N(u,v)$ defined on an open set $\cU\subset \R^2\equiv \C$ containing $\R$, such that
 \begin{equation}\label{incon}
N(u,0)=\alfa(u), \hspace{1cm} N_v(u,0)=0,
 \end{equation}
for all $u\in \R$. Moreover, as $\alfa$ is $2\pi$-periodic, so is $N$ (by uniqueness). Assertion $i)$ follows then immediately.

Let us now prove $iii)$. Consider an immersed $K=1$ surface $\mathcal{S}\subset \R^3$
with $p\in \R^3$ a non-extendable finite isolated singularity. Without loss of generality we may assume that
$p=(0,0,0)$.

By Theorem \ref{mainth1}, $\cS$ has the extrinsic conformal structure of an annulus.
This implies that, changing $\mathcal{S}$ by a proper subset of it if necessary, $\mathcal{S}$ can be conformally parametrized with
respect to the second fundamental form by a map $X:\cU^+\subset
\C\flecha \R^3$, so that $\mathcal{S}=X(\cU^+)$, where
$\cU^+:=\{z\in \C: 0 < {\rm Im} z < \delta \}$ for some $\delta>0$, and $X$ is
$2\pi$-periodic. Moreover, $X$ is real analytic and extends
continuously to the boundary $\R\subset
\parc \cU^+$ with $X(u,0)=0$ for all $s\in \R$. Also, observe that
the other boundary component $\R + i\, r$ of $\parc \cU^+$ is not
relevant to our study: we can assume that $X$ extends analytically
across $\R + i \,r$ to a larger open set.

Write now $X(u,v)= (X_1(u,v),X_2(u,v),X_3(u,v))$, where $z=u+iv$.
As the singularity has finite area we have $$\int_{\cU^+} (\esiz
X_u,X_u\esde + \esiz X_v,X_v\esde) du \, dv <\8,$$ and so we see
that $X_1,X_2,X_3$ belong to the Sobolev space $H^1 (\Omega)\equiv
W^{1,2}(\Omega)$.

Now, recall that $X$ is a solution to \eqref{eqforX} that vanishes
on $\R\cap \parc \Omega^+$. In these conditions Lemma \ref{jacobo}
ensures that $X$ extends smoothly up to the boundary. It follows
then that the extension of $X$ to
 \begin{equation}\label{defome}
 \cU :=\{z\in \C : -\delta<{\rm Im} \, z < \delta\}
 \end{equation}
given by $X(u,-v)=-X(u,v)$ is a real analytic map with $X(u,0)=0$. Hence, $$X_u\times X_v :\cU\flecha \R^3$$ satisfies $(X_u\times X_v)(u,-v)=-(X_u\times X_v)(u,v)$ and vanishes along $\R$. By analyticity and a simple power series argument, we can write near $\R$ $$X_u \times X_v (u,v)=\xi (u,v) \, F(u,v),$$ where $\xi:\cU\flecha \R$ is real analytic with $\xi (u,-v)=-\xi (u,v)$ and $\xi (u,0)\equiv 0$, and $F:\cU\flecha \R^3$ is also real analytic with $F(u,v)=F(u,-v)$ and $F\neq (0,0,0)$ along $\R$. Therefore, the unit normal $N$ of $X$ can be analytically extended across $\R$ by $N=F/||F||$. That is, $N$ extends analytically to a map
$N:\cU\flecha \S^2$ as $N(u,v)=N(u,-v)$, and the surface $X$ is recovered in terms of
$N$ by \eqref{forbas}. Thus, denoting $\alfa(u):=N(u,0)$, we see
that $\alfa$ is real analytic, $2\pi$ periodic, and $N$ is the
unique solution to the harmonic map equation into $\S^2$ for the
initial conditions \eqref{incon}.

Let us prove now that $\alpha$ is a locally convex curve with admissible cusps, following Definition \ref{admiscusp}. First, observe that the zeros of $Q$ are isolated on $\cU$, and
that on $\R$ we have $||\alfa'(u)||^2 = |Q(u,0)|$. Also, observe
that the function $\rho$ in \eqref{fundam} vanishes on $\Omega$
exactly at the points in the real axis (since in the general case,
it vanishes at the singular points of $X$). This tells that the function $\omega$ given by \eqref{omro} also satisfies $\omega (u,0)=0$. So, from \eqref{omro}, we have then
 \begin{equation}\label{rouve}
\rho_v(u,0)= ||\alfa'(u)||^2 \omega_v (u,0).
 \end{equation}
In addition, if $J$ denotes the complex structure of $\S^2$, from
\eqref{forbas} we have at $(u,0)$,
$$\def\arraystretch{1.4}\begin{array}{lll} \esiz \alfa'',J\alfa'\esde &=&
\esiz N_{uu},N\times N_u\esde = \esiz N\times N_u,N\times
X_{uv}\esde = \esiz N_u,X_{uv}\esde \\ & = & \displaystyle
\frac{\parc}{\parc v} \left( \esiz N_u,X_u\esde \right) - \esiz
N_{uv},X_u\esde = \rho_v ,\end{array}$$ where we have used that
$X_u(u,0)=0$. If we compare this with \eqref{rouve}, we get the
relation
 \begin{equation}\label{omegav}
   \omega_v(u,0)= ||\alfa'(u)|| k_{\alfa} (u),
 \end{equation}
where here $k_{\alfa} (u)$ stands for the geodesic curvature in
$\S^2$ of $\alfa(u)$. This equation implies that \begin{equation*}
 \lim_{u\to u_0} ||\alfa'(u)|| k_{\alfa} (u)= c_0 \in \R,
 \end{equation*}
for every $u_0\in \R$, and we want to ensure that $c_0\neq 0$. For that, observe that if $c_0=0$ for some
$u_0$, then $\nabla \omega (u_0)=0$. Now, as
$\omega$ satisfies the elliptic PDE \eqref{lapl}, this implies that
there are at least two nodal curves of $\omega$ passing through
$u_0$ (one of which is the real line). But as the zeros of $\omega$
are singular points of the surface, this contradicts the fact that
$X$ is regular on $\cU\setminus \R$, i.e. the fact that the
singularity is isolated.

So, we have proved that $\alfa$ is a locally convex curve with admissible cusps singularities, as desired. This completes the proof of $iii)$.

In order to prove assertion $ii)$, let $X:\cU\flecha \R^3$ be the
map given by the representation formula \eqref{forbas} in terms of
the harmonic map $N:\cU/2\pi \Z\flecha \S^2$ with initial
conditions \eqref{incon}, where $\cU\subset \C$ is given by
\eqref{defome} for some $\delta>0$. It is then immediate that $X(u,0)=p$ for some $p\in
\R^3$, so it follows from \eqref{forbas} and the periodicity of $N$
that $X$ is also $2\pi $-periodic. Since $\alfa(u)$ is locally convex with admissible cusps (see Definition \ref{admiscusp}, the map
$\omega:\Omega\flecha \R$ given by \eqref{omro} satisfies
\eqref{omegav}. Hence, $\omega_v (u,0) \neq 0$ for all $u\in \R$,
and this implies that $\omega \neq 0$ on $\cU\setminus \R$ (by
taking a smaller $\delta>0$ in the definition of $\cU$, if necessary).
As a consequence, if $\cU^+ :=\cU \cap \{ {\rm Im} \, z >0\}$,
then $X:\cU^+ /2\pi \Z\flecha \R^3$ is regular, and therefore it
is a $K=1$ surface in $\R^3$ having $p$ as an isolated singularity. By construction, the singularity is non-extendable (since it has the conformal type of an annulus) and finite (since it has finite area around the singularity). This concludes the proof of $ii)$,
and of the theorem.
 \end{proof}

\begin{remark}
\emph{Theorem \ref{mainth2} shows that any non-extendable isolated
singularity with finite area of a $K$-surface in $\R^3$ has a well
defined limit unit normal at the singularity, which is a real
analytic closed strictly convex curve with admissible cusps in
$\S^2$. In this sense, the following definition is natural.}
\end{remark}

\begin{defi}
A non-extendable finite isolated singularity of a $K$-surface in $\R^3$ will be called an \emph{immersed conical singularity}.
\end{defi}

Let us recall that there exists an \emph{intrinsic} notion of
conical singularity for a Riemannian metric on a punctured disk.
Specifically, a conformal Riemannian metric $\landa |dz|^2$ on $\D^*$ has a
\emph{conical singularity} of angle $2\pi \theta$ at $0$ if $$\landa
= |z|^{2\beta} \, f \, |dz|^2,$$ where $f$ is a continuous positive
function on $\D$ and $\beta=\theta-1 >-1$.

The next result shows that immersed conical singularities of
$K$-surfaces in $\R^3$ are indeed conical from an intrinsic point of
view.

\begin{pro}\label{coni}
Let $\psi:D^*\flecha \R^3$ denote a $K$-surface with an immersed
conical singularity at the puncture $q\in D$. Then its intrinsic
metric has a conical singularity at $q$.
\end{pro}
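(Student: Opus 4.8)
The plan is to forget the ambient immersion and argue purely intrinsically. Write $g=\esiz d\psi,d\psi\esde$ for the first fundamental form. Since $K$-surfaces in $\R^3$ are real analytic, $g$ is a real analytic Riemannian metric of Gauss curvature $K\equiv 1$ on $D^*$. Choosing a conformal parameter $w$ for $g$ on a punctured neighbourhood of $q$ with $w\to 0$ at $q$ (which exists since every surface metric is locally conformally flat), we may write $g=\landa\,|dw|^2$ with $\landa>0$ real analytic on $0<|w|<1$, and the curvature equation becomes the Liouville equation $\lap\log\landa=-2\landa$. An immersed conical singularity is, by definition, a \emph{finite} isolated singularity, i.e.\ (Proposition~\ref{finito}) one of finite area around $q$, so $\landa\in L^1$ near $0$; in particular the $g$-area of $\{0<|w|<\ep\}$ tends to $0$ as $\ep\to 0$, and after shrinking the disc we may assume $\int\landa<2\pi$.

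We are thus reduced to the following: \emph{a real analytic conformal metric $\landa|dw|^2$ of finite area on a punctured disc solving $\lap\log\landa=-2\landa$ satisfies $\landa=|w|^{2\beta}f$ for some $\beta>-1$ and some $f$ continuous and positive up to $0$} --- which is precisely a conical singularity, of cone angle $2\pi(\beta+1)$. I would decompose $\tfrac12\log\landa=v_0+h$, where $v_0\ge 0$ is the Green potential of $\landa$ on the disc (so $\lap v_0=-\landa$ and $v_0=0$ on the boundary) and $h$ is harmonic on $D^*$. From $\landa=e^{2v_0}e^{2h}\ge e^{2h}$ and finiteness of the area, $e^{2h}\in L^1$ near $0$; writing the harmonic function $h$ as $\beta\log|w|+\Re\Psi$ with $\Psi$ holomorphic on $D^*$, the integrability of $e^{2h}=|w|^{2\beta}|e^{2\Psi}|$ forces $e^{2\Psi}$ to extend holomorphically and zero-freely across $0$ (single-valuedness excludes a pole, and an essential singularity would produce non-integrable growth $\exp(c|w|^{-N})$ on a sector), so $h=\beta\log|w|+h_0$ with $h_0$ harmonic on the whole disc and, again by integrability of $e^{2h}$, with $\beta>-1$. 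It then only remains to show that $e^{2v_0}$ is continuous and positive up to $0$; positivity is immediate from $v_0\ge 0$, while continuity at $0$ is equivalent to the bound $\landa\lesssim|w|^{2\beta}$ near $0$, which does \emph{not} follow from $\landa\in L^1$ alone.

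That last bound --- the genuine passage from ``finite area'' to ``conical'' --- is the step I expect to be the main obstacle, and it is the point at which the equation $\lap\log\landa=-2\landa$ must finally be used in an essential way. It follows from the small--mass/quantization analysis of Brezis--Merle and Li--Shafrir for $\lap u+e^{2u}=0$ (equivalently, from the local classification of finite-area constant-curvature conformal metrics near an isolated singularity, in the spirit of Troyanov): the hypothesis $\int\landa<2\pi$ rules out concentration at $0$ and gives $\landa\in L^{q}$ for some $q>1$ near $0$, whence $v_0$ is continuous there. This finishes the argument, with cone angle $2\pi(\beta+1)>0$.

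A more hands-on alternative, which also yields an explicit cone angle, uses Theorem~\ref{mainth2} directly. Realise $\cS$ near $q$ as the map $X$ on a half-strip with $X\equiv p$ on $\{v=0\}$, extend it to the full strip by the odd reflection $X(u,-v)=2p-X(u,v)$, and expand in $v$: since $X-p$ is odd in $v$ one gets $g=v^{2}\tilde E\,du^{2}+2v\tilde F\,du\,dv+\tilde G\,dv^{2}$ with $\tilde E,\tilde F,\tilde G$ real analytic and even in $v$, and $\tilde G|_{v=0}=\|\alfa'\|^{2}$. Passing to $\tau=-\log v$ exhibits $g$ as $e^{-2\tau}$ times a metric on a half-cylinder $\{\tau>T\}$ converging as $\tau\to\8$ to a $\tau$-independent metric $g_\infty$; the condition $K\equiv 1$ forces this leading piece $e^{-2\tau}g_\infty$ to be flat, hence a flat cone tip, so that $g=|w|^{2(a-1)}f$ with $f$ continuous and positive at $q$, the cone angle $2\pi a$ being a positive multiple of $\int_{0}^{2\pi}\|\alfa'(u)\|\,k_{\alfa}(u)\,du=\int_{0}^{2\pi}\omega_v(u,0)\,du$ (recall~\eqref{omegav}), which is positive by admissibility of the cusps of $\alfa$. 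The obstacle in this second route is the cusps of $\alfa$ themselves: they lie on the collapsed circle $\{v=0\}$, where $Q$ vanishes and the natural Hopf-differential coordinate degenerates, so the expansion near those values of $u$ has to be carried out separately using the normal form~\eqref{limbe}.
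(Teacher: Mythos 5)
Your first route has a genuine gap at its very first step: you ``choose a conformal parameter $w$ for $g$ on a punctured neighbourhood of $q$ with $w\to 0$ at $q$''. Local conformal flatness only provides isothermal charts near each point of $D^*$; it does not tell you that the conformal type of an annular neighbourhood of the puncture, for the \emph{intrinsic} metric, is that of a punctured disc rather than a genuine annulus $\{r<|w|<1\}$ with $r>0$, in which case no such $w$ exists. Finite area does not decide this either: finite-area conformal metrics live happily on genuine annuli. This is not a technicality --- settling the intrinsic conformal type is essentially the entire content of the paper's proof. The paper quotes \cite[Proposition 4]{Bry} for precisely the statement your first route re-derives (a finite-area $K=1$ conformal metric on a punctured \emph{disc} has a conical singularity; your Green-potential decomposition plus the small-mass regularity argument is a reasonable proof of that), and then devotes all its effort to excluding the annulus type: if the type were an annulus $\Omega/\Z$, the developing map $g:\Omega\to\bar{\C}$ would be a local isometry onto the round sphere; since the metric tensor extends real-analytically (degenerately) to the collapsed boundary circle, the closed curves $[0,1]\times\{ib\}$ have $ds^2$-length tending to $0$ and radial segments have finite length, which forces $g$ to extend continuously to the boundary with a single constant value and hence to be constant --- a contradiction. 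Note that the \emph{extrinsic} conformal type near the singularity genuinely is an annulus (Theorem \ref{mainth1}), so the punctured-disc claim for the intrinsic structure really does require an argument.

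Your second route engages with the right object (the degenerating metric $v^{2}\tilde E\,du^{2}+2v\tilde F\,du\,dv+\tilde G\,dv^{2}$ in the extrinsic conformal coordinate) and would, if completed, sidestep the conformal-type issue by exhibiting the asymptotic cone directly; but as written it is a programme rather than a proof. The passage from ``$K\equiv 1$ forces the leading piece to be flat'' to the normal form $\lambda=|w|^{2\beta}f$ still requires producing the intrinsic conformal coordinate $w$ on a punctured disc (the conformal-type question again, now to be extracted from the metric asymptotics, e.g.\ by a quasiconformal comparison with a flat cone), and the degeneration of $\tilde G$ at the cusps of $\alfa$, which you correctly flag, is left unresolved. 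Either route needs this missing step supplied; the paper's developing-map argument is the economical way to do it.
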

\begin{proof}
Since the area of the surface around an immersed conical singularity
is finite, in order to show that the intrinsic metric $ds^2$ of
$\psi$ has a conical singularity at $q$ if suffices to show, by
\cite[Proposition 4]{Bry}, that the conformal type of $ds^2$ around $q$ is
that of a punctured disk.

If this is not the case, then we can parameterize conformally (for
the intrinsic metric $ds^2$) a punctured neighborhood $\cU^*$ of $q$
as a quotient $\Omega /\Z$, where $\Omega:= \{z\in \C: 0<{\rm Im
z}<r\}$ for some $r>0$ (here the real axis corresponds to the
singularity). Thus, there is a meromorphic map $g:\Omega\flecha
\bar{\C}$ such that
$$ds^2 = \frac{4 |g'|^2}{(1+|g|^2)^2} \, |dz|^2,$$ and this metric
is well defined on the conformal annulus $\Omega /\Z$.

Define now the curve $$\gamma(t)= i r (1-t):(0,1)\flecha \Omega\subset \C.$$ As by Theorem \ref{mainth3} we can deduce that the metric of the surface extends smoothly (as a tensor, not as a regular metric) to the singularity, i.e. to the real axis, it is clear that the length $L(\gamma)$ of $\gamma$ is finite as $t\to 0$. In particular, any sequence $\{i s_n\}\to 0$, where $0<s_n<r$, has the property that it is a Cauchy sequence for the $K=1$ metric $ds^2$.

So, as $g$ provides a local isometry between $(\Omega, ds^2)$ and $\bar{\C}$ endowed with its canonical spherical metric, there exists $z_0\in \bar{\C}$ such that $\{g(i s_n)\}\to z_0$ for any sequence $\{i s_n\}$ in the above conditions.

Let us show that $g$ can be extended to $(0,1)\subset\R$ so that $g(s)=z_0$ for
all $s\in (0,1)$. This would imply that the meromorphic function $g$ is constant, a
contradiction.

For that, take $r_0\in (0,1)$ and $\{a_n+ib_n\}$ a sequence of points in $\Omega$ that converge to $r_0$ (for the flat metric in $\C$). We also assume that $a_n\in (0,1)$ and that the real sequence $\{b_n\}$ strictly decreases to $0$.

Observe now that the curves $\beta_b$ whose image is $[0,1]\times \{ib\}$ give rise to a
foliation of $\Omega /\Z$ by closed curves, so that the lengths of $\beta_b$ tend to zero when $b\to 0$ (again since $ds^2$ extends smoothly up to the real axis).

Therefore, $$\def\arraystretch{1.3}\begin{array}{lll}
d(g(a_n+ib_n),z_0) &\leq & d(g(a_n+ib_n),g(i b_n)) + d(g(i b_n),z_0) \\
& \leq & d(a_n +i b_n, i b_n) + d(g(i b_n),z_0) \\ & \leq & L(\beta_{b_n}) + d (g(i b_n), z_0),\end{array} $$ where $d$ denotes indistinctly the spherical distance on $\bar{\C}$, or the distance for the $ds^2$ metric on $\Omega /\Z$ (recall that they are isometric via $g$). As the terms on the right part of the inequality tend to zero when $n\to \8$, we deduce that $g(a_n+ i b_n)\to z_0$, as wished.

This is a contradiction, as explained before. So, the conformal
structure must be that of a punctured disk, and $ds^2$ has a conical
singularity.

\end{proof}

\subsection*{The classification of embedded isolated singularities}

We now focus our study on classifying the embedded isolated
singularities of $K$-surfaces in $\R^3$. First, we have:

\begin{lem}\label{graf}
Let $\psi:D^*\flecha \R^3$ be a strictly locally convex surface in
$\R^3$ having $p=\psi (q)$ as an embedded isolated singularity. Then
there is a neighborhood of $q$ such that
$\psi$ is a convex graph (possibly singular at the puncture)
over some plane of $\R^3$.

In particular, every embedded isolated singularity of a $K$-surface
in $\R^3$ has finite area. So, it is a removable singularity or one
of the conical singularities constructed in Theorem \ref{mainth2}.
\end{lem}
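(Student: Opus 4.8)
The plan is to first establish the geometric claim that near an embedded isolated singularity a strictly locally convex surface is a convex graph, and then to harvest the ``finite area'' consequence from the theory of convex functions. So I would argue as follows. Since $\psi$ is strictly locally convex and embedded around $q$, its image near $p=\psi(q)$ lies, by the local convexity, on the boundary of a convex body; hence after a rigid motion there is a supporting plane $\Pi$ at $p$ such that the surface lies locally on one side of $\Pi$. By strict local convexity the Gauss map is a local diffeomorphism away from $q$, and the tangent planes along $\psi(D^*)$ stay in a small cone around $\Pi$ provided we shrink the neighborhood (this uses continuity of the second fundamental form on $D^*$ together with the embeddedness, which prevents the surface from ``turning around''). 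Consequently the vertical projection $\pi_\Pi\circ\psi$ onto $\Pi$ is a local diffeomorphism on $D^*$; since $\psi$ is an embedding near $q$ and extends continuously to $q$, this projection is injective on a punctured neighborhood of $q$, so $\psi$ is a graph $z=u(x,y)$ of a convex function $u$ over a punctured planar disc $\mathcal U^*\subset\Pi$, possibly singular (non-$C^1$) only at the image of $q$. This is the first displayed assertion of the lemma.

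Next I would extract finiteness of area. A convex function $u$ defined on a disc $\mathcal U$ (continuous up to the boundary, smooth on $\mathcal U$ minus the center) is automatically Lipschitz on compact subsets of the open disc; but more is true here: convexity on the whole disc, including a neighborhood of the puncture, forces $u$ to be Lipschitz up to and including the puncture, because a convex function on a convex open set in $\R^2$ is locally Lipschitz there, and the puncture is an interior point of $\mathcal U$. Hence $|\nabla u|$ is bounded near the center, and the area of the graph, $\int_{\mathcal U^*}\sqrt{1+|\nabla u|^2}\,dx\,dy$, is finite since the integrand is bounded on a bounded domain. Therefore $\psi$ has finite area around $p$. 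Since a $K$-surface in $\R^3$ is automatically strictly locally convex (the second fundamental form is definite, $K=1>0$), the first part of the lemma applies to any embedded isolated singularity of a $K$-surface, so it has finite area. Finally, by Theorem \ref{mainth1} an isolated singularity of a $K$-surface is extendable iff it is removable or a branch point; a branch point is never embedded (as already noted in the text, an embedded branch point would force a graph and hence an injective Gauss map, contradiction); so an embedded isolated singularity of finite area is either removable or non-extendable, and in the latter case it is, by Proposition \ref{finito} and Theorem \ref{mainth2}, one of the immersed conical singularities constructed there.

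The main obstacle is the first step: showing that the tangent planes of $\psi$ stay uniformly close to a fixed plane on a small enough punctured neighborhood of $q$, so that the projection is a local, and then global, diffeomorphism onto a planar disc. The subtlety is that strict local convexity alone does not prevent the Gauss image from wandering over a large region of $\S^2$ as one approaches the puncture; it is the \emph{embeddedness} of $\psi$ near $q$, together with $\psi$ being the boundary of a convex region locally, that confines the surface to lie on one side of a supporting plane and keeps the normals in a cone. I would make this precise by invoking the local structure of convex surfaces: an embedded, locally strictly convex surface with an isolated boundary-type puncture lies on the boundary of its local convex hull, and the supporting plane at $p$ does the job. Once the graph representation is in hand, the convex-analysis facts (local Lipschitz continuity of convex functions at interior points, hence bounded gradient, hence finite graph area) are standard and routine, and the reduction via Theorem \ref{mainth1}, Proposition \ref{finito} and Theorem \ref{mainth2} is immediate.
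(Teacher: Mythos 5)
Your handling of the second and third assertions is correct and coincides with the paper's: once $\psi$ is known to be a convex graph near the puncture, convexity of the height function on the full disc (which follows from convexity on the punctured disc plus continuity) gives local Lipschitz continuity at the interior point under the puncture, hence a bounded gradient and finite graph area; and the final dichotomy is exactly the paper's combination of Theorem \ref{mainth1}, the impossibility of an embedded branch point, and Theorem \ref{mainth2}.

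The problem is the first assertion, which is where all the content lies. The paper does not prove it; it cites \cite{GaMi}, where the analogous local statement was established. Your argument for it is circular at the decisive point: you assert that near $p$ the image ``lies, by the local convexity, on the boundary of a convex body'' and that the tangent planes ``stay in a small cone around $\Pi$'' after shrinking the neighborhood, citing continuity of the second fundamental form on $D^*$ and embeddedness. But strict local convexity is a pointwise, second-order condition and gives no a priori control on how far the Gauss image wanders as one approaches the puncture, and continuity of $II$ on the \emph{punctured} disc says nothing about behavior at $q$. The claim that the normals remain in an open hemisphere (equivalently, that a single supporting plane at $p$ works for a whole punctured neighborhood, so that the projection is a global diffeomorphism onto a planar punctured disc) is precisely the conclusion to be proved, not an input; it is a punctured-disc analogue of the van Heijenoort--Sacksteder passage from local to global convexity and requires a genuine argument (this is what \cite{GaMi} supplies). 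You correctly identify this as ``the main obstacle,'' but then resolve it only by ``invoking the local structure of convex surfaces,'' which is an appeal to the very fact in question. As written, the proof of the first statement is incomplete; either reproduce the argument of \cite{GaMi} or cite it explicitly.
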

\begin{proof}
The first assertion was proved by the first and third author in
\cite{GaMi}. By convexity, it is well known then that $\psi$
has finite area around the singularity. The rest follows from Theorems \ref{mainth1} and
\ref{mainth2}, bearing in mind that a $K$-surface in $\R^3$ cannot
be embedded around a branch point.
\end{proof}

Once here, the next result characterizes which of the curves $\alfa$
in $\S^2$ described by Theorem \ref{mainth2} correspond to embedded
isolated singularities.

\begin{teo}\label{mainth3}
Let $\psi:D^*\flecha \R^3$ denote a $K$-surface with an immersed conical singularity at $p=\psi(q)$. Let $\alfa:\S^1\flecha \S^2$ denote its limit unit normal at the singularity. Then $p$ is an embedded isolated singularity if and only if $\alfa$ is a regular convex Jordan curve in $\S^2$.

In particular, the conical angle at the singularity is given by $2\pi -\cA (\alfa)$, where $\mathcal{A} (\alfa)$ stands
for the area of the smallest region of $\S^2$ enclosed by the convex
Jordan curve $\alfa$.
\end{teo}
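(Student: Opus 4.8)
Below is the proof plan I would follow. I use the extrinsic (second fundamental form) conformal parametrization, which by Theorem~\ref{mainth1} is an annulus, and the doubling from the proof of Theorem~\ref{mainth2}: even reflection $X(u,-v)=2p-X(u,v)$ produces a real‑analytic $K=1$ frontal $X$ on $\cU=\{|v|<\delta\}$ with Gauss map $N$ satisfying $N(u,-v)=N(u,v)$, $N|_{\S^1}=\alfa$, $N_v|_{\S^1}=0$ and $X|_{\S^1}\equiv p$, so that $X-p$ is odd in $v$; equation \eqref{omegav} gives $\omega_v(u,0)=\|\alfa'(u)\|k_{\alfa}(u)$. Two observations are used throughout. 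First, when $\alfa$ is regular ($\alfa'\neq0$) the map $N$ has a Whitney fold along $\S^1$: its critical set is exactly $\S^1=\{v=0\}$, $\ker dN$ is spanned by $\partial_v$, which is transverse to $\S^1$, and the conormal component of the tangential part of $N_{vv}(u,0)=-\alfa''-\|\alfa'\|^2\alfa$ equals $-k_{\alfa}\|\alfa'\|^2\neq0$; hence $N(\{0<v<\delta\})$ is a one‑sided collar of $\alfa$. Second, from \eqref{forbas}, $X_u(u,0)=0$ and $X_v(u,0)=-\alfa(u)\times\alfa'(u)=-\|\alfa'\|\,\hat n_{\alfa}$.

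\textbf{Embedded $\Rightarrow$ $\alfa$ is a regular strictly convex Jordan curve.} By Lemma~\ref{graf} the surface is a strictly convex graph near $p$, so its Gauss map is injective on $\A_r$ ($r$ close to $1$) with image in an open hemisphere. I first rule out cusps: at a cusp $s_0$ one has $\alfa'(s_0)=0$, hence $dN(s_0,0)=0$, and by the local structure of harmonic maps into $\S^2$ (\cite{Woo}, Section~\ref{sec:prelim}), near $(s_0,0)$ the map $N$ is either a meeting of regular curves along which $dN$ has rank one, or has a branch point. The branch point case is impossible: in Wood's normal form $N\sim w^{k}$ the relation $N\circ\sigma=N$, $\sigma(u,v)=(u,-v)$, forces $\sigma$ to be $w\mapsto-w$ in the $w$‑coordinate, whose fixed set is a point, contradicting that $\mathrm{Fix}(\sigma)=\S^1$ is a curve. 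In the other case, one of the rank‑one curves is $\S^1$; any further one meets $\{v>0\}$ by the $v\mapsto-v$ symmetry, where $X$ would be singular, contradicting that the singularity is isolated. Thus $\alfa$ is regular, so $k_{\alfa}\neq0$ everywhere and $\alfa$ is strictly convex. Finally $\alfa$ is simple: a self‑intersection of $\alfa$ would force two of the one‑sided collars $N(\{0<v<\delta\})$ to overlap near the intersection point (a transverse intersection gives two half‑neighbourhoods bounded by distinct lines, which necessarily overlap; the constant sign of $k_{\alfa}$ rules out the only tangential configuration in which they could stay disjoint), contradicting injectivity of $N$ on $\A_r$.

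\textbf{$\alfa$ regular strictly convex Jordan $\Rightarrow$ embedded.} I show the surface is a convex graph. A strictly convex Jordan curve in $\S^2$ bounds a geodesically convex disc of area $<2\pi$ and hence lies, with a neighbourhood, in an open hemisphere $\{\esiz\cdot,e\esde>0\}$; by the fold description $N(\A_r)$ is a thin collar of $\alfa$, so $N(\A_r)\subset\{\esiz\cdot,e\esde>0\}$ for $r$ near $1$. Then the tangent planes of $X$ never contain $e$, so the orthogonal projection $P:\R^3\to e^{\perp}$ makes $P\circ X$ a local diffeomorphism on $\A_r$; it extends continuously to $\overline{\A_r}$ with $P\circ X|_{\S^1}\equiv P(p)$, hence descends to $F:\overline{\A_r}/\S^1\cong\overline{\D}\to e^{\perp}$, a local homeomorphism on $\overline{\D}\setminus\{0\}$. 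Using $X(u,v)=p+v\,X_v(u,0)+O(v^3)$ and $X_v(u,0)=-\|\alfa'\|\hat n_{\alfa}$, the curve $F|_{\partial\D}$ is, for $r$ near $1$, a small embedded curve winding once about $P(p)$, because (for a suitable choice of $e$) the normal indicatrix $u\mapsto\alfa(u)\times\alfa'(u)/\|\alfa'(u)\|$ is an embedded closed curve of winding number one — a consequence of strict convexity via polar duality (under gnomonic projection $\alfa$ becomes a $C^2$ strictly convex planar curve, whose tangent great circles are pairwise distinct). A local homeomorphism of a disc onto a region whose boundary is an embedded Jordan curve is injective, so $P\circ X$ is injective on $\A_r$, and $X(\A_r)$ is the graph of a strictly convex function over a punctured convex domain, continuous up to the puncture — in particular embedded. \emph{The main obstacle is precisely this step}: passing from the local injectivity of $N$ and $X$ near $\S^1$ (automatic from the fold) to global injectivity on the thin annulus genuinely requires that $\alfa$ be a convex \emph{Jordan} curve, so that its indicatrix is embedded and winds once, and not merely a regular curve.

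\textbf{The cone angle.} By Proposition~\ref{coni} the intrinsic metric $I$ has a conical singularity of angle $2\pi\theta$ at $q$. Applying Gauss–Bonnet to the collars $\{\epsilon<v<\delta_0\}$, using $K\equiv1$ and finiteness of the area (Proposition~\ref{finito}), one gets $2\pi\theta=\lim_{\epsilon\to0}\int_{c_{\epsilon}}k_g\,ds$, where $c_{\epsilon}=X(\{v=\epsilon\})$ and $k_g,\,ds$ refer to $I$. Since $X_u(u,0)=X_{uu}(u,0)=0$, expanding $X$ near $\S^1$ via \eqref{forbas} and the harmonic map equation — most cleanly in the $\S^2$‑arclength parameter of $\alfa$, in which $X_{uv}(u,0)=k_{\alfa}\alfa'$ and $X_{uuv}(u,0)=-k_{\alfa}\alfa+k_{\alfa}'\alfa'+k_{\alfa}^2\hat n_{\alfa}$ — and substituting into $k_g\,ds=\esiz N,\,X_u\times X_{uu}\esde\,|X_u|^{-2}\,du$, one finds that $k_g\,ds$ along $c_{\epsilon}$ converges to $k_{\alfa}\,ds_{\alfa}$. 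Hence $2\pi\theta=\int_{\alfa}k_{\alfa}\,ds_{\alfa}$, and Gauss–Bonnet on $\S^2$ applied to the region of area $\cA(\alfa)$ bounded by the convex Jordan curve $\alfa$ evaluates this as $2\pi-\cA(\alfa)$, which is the asserted formula. The computation here is routine once the limiting geodesic curvature of $c_{\epsilon}$ has been identified; the only care needed is the Taylor expansion of $X$ along the singular circle.
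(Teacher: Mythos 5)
Your argument for the direction ``embedded $\Rightarrow$ $\alfa$ regular'' has a genuine gap at the point where you rule out cusps. You invoke Wood's dichotomy at a point where $dN(s_0,0)=0$ and claim that either $N$ has a branch point (excluded by the symmetry $N(u,-v)=N(u,v)$) or there are rank-one curves other than $\S^1$ entering $\{v>0\}$ (excluded because the singularity is isolated). But at an admissible cusp neither alternative occurs: since $\omega_v(u_0,0)=c_0\neq 0$ by \eqref{omegav} and $Q$ has an isolated zero at $(u_0,0)$, the singular set of $N$ near the cusp is exactly the single regular curve $\{v=0\}$, passing through a point where $dN$ has rank zero, so your dichotomy produces no contradiction. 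Worse, your cusp-exclusion argument nowhere uses embeddedness; if it were valid it would show that admissible cusps never occur for immersed conical singularities, contradicting Theorem~\ref{mainth2}. The paper's proof of this direction rests on a convexity fact you do not use: by Lemma~\ref{graf} the surface is part of the boundary of a convex body, the set of interior unit support vectors at a boundary point of a convex body is a convex subset of $\S^2$, and hence $\alfa(\S^1)$, being the limit set of normals at $p$, is the boundary of a spherically convex set; this is incompatible with the local normal form \eqref{limbe} of an admissible cusp and simultaneously yields that $\alfa$ is a Jordan curve. Some version of this idea is needed; injectivity of $N$ plus the local structure of harmonic maps is not enough.

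For the converse you take a genuinely different route from the paper: you project orthogonally onto $e^{\perp}$ and argue injectivity of $P\circ X$ by a winding-number argument on the quotient disc, whereas the paper passes to the Legendre transform of $X$ and shows that the Gauss map of the Legendre dual is a global diffeomorphism on thin slabs. Your approach is workable but several steps are asserted rather than proved: the claim that a local homeomorphism of a disc onto a region with embedded Jordan boundary is injective needs a degree argument that also handles the puncture (where $F$ is only continuous), and the embeddedness and winding number one of the projected indicatrix $u\mapsto P(-\alfa(u)\times\alfa'(u))$ (a nonconstant positive multiple of the polar dual, then projected) requires more than the remark on polar duality. Your Gauss--Bonnet computation of the cone angle via the limiting geodesic curvature of the curves $c_{\epsilon}$ is a legitimate alternative to the paper's limit-tangent-cone argument, but the proof as a whole cannot stand until the cusp exclusion is repaired.
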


\begin{proof}
Let $X:\cU\flecha \R^3$ be a $K=1$ surface with a conical
singularity, as constructed in Theorem \ref{mainth2}, and assume
that it is embedded. By Lemma \ref{graf}, we can assume that
$X(\cU^+)$ is a convex graph in the $x_3$-axis direction. Hence,
making $\cU^+$ smaller if necessary, it is clear that there exists a
compact convex body $K\subset \R^3$ such that $X(\cU^+)$ is a piece
of its boundary.

Besides, it is well known that the set of interior unit support
vectors at an arbitrary boundary point $p_0$ of a convex body $K$ is
a convex set of the unit sphere $\S^2$. In particular, if the
boundary of $K$ is $C^1$ at $p_0$, this convex set is just a point
in $\S^2$, which agrees with the Gauss map of $K$ at $p_0$.

In this way, as $\alfa(\S^1)$ is the limit set of unit normals of
$X(\cU^+)$, we easily see that $\alfa(\S^1)\subset \S^2$ is the
boundary of a convex set of $\S^2$.  This ensures that the local
behavior \eqref{limbe} is impossible for the curve $\beta=\pi \circ
\alfa$. So, from Theorem \ref{mainth2} we see that $\beta$ (and thus
$\alfa$) is regular and strictly locally convex, and a simple
topological argument ensures that it is actually a convex Jordan
curve (since it is a limit of locally convex Jordan curves in
$\S^2$).

Therefore, we conclude that the curve $\alfa$ is a regular convex
Jordan curve in $\S^2$, as wished.

For the converse, we will use the \emph{Legendre transform} (see
\cite{LSZ})

\begin{equation}\label{leg}
 \cL_X =
\left(\frac{-N_1}{N_3},\frac{-N_2}{N_3}, -X_1 \frac{N_1}{N_3} - X_2
\frac{N_2}{N_3} -X_3\right):\Omega^+\flecha \R^3,
 \end{equation}
where $X=(X_1,X_2,X_3)$ and $N=(N_1,N_2,N_3)$. It is classically
known that $\cL_X$ can be defined for convex multigraphs in the
$x_3$-axis direction, so that $\cL_X$ is also a convex multigraph in
the $x_3$-axis direction. The interior unit normal of $\cL_X$ is
 \begin{equation}\label{uleg}
 \cN_{\cL} = \frac{1}{\sqrt{1+X_1^2 +X_2^2}} \left( -X_1,-X_2,1\right).
 \end{equation}

So, assume that the $K$-surface $X:\cU\flecha \R^3$ with a conical
singularity is generated following Theorem \ref{mainth2} from a
regular convex Jordan curve $\alfa$ in $\S^2$. We prove below that
$X(\cU^+)$ is a graph, taking $\delta>0$ in \eqref{defome} smaller
if necessary, what concludes the proof.

Since $\alfa$ is a convex Jordan curve,
it lies on a hemisphere of $\S^2$, say, $\S_+^2 =\{(x_1,x_2,x_3)\in
\S^2: x_3 > 0\}$, and so $X|_{\cU^+}$ is a local graph in the
$x_3$-axis direction.

Let now $\cL_X:\cU^+ /2\pi \Z\flecha \R^3$ denote the Legendre
transform \eqref{leg} of $X$. It turns out that $\cL_X (\R /2\pi
\Z)$ is a regular convex Jordan curve in the $x_1,x_2$-plane, and
that the unit normal of $\cL_X$ along $\R$ is $(0,0,1)$, constant.

Therefore, $\cL_X$ lies in the upper half-space $\R_+^3$, and there
is some $\ep_0>0$ such that for every $\ep \in (0,\ep_0)$ the
intersection $\Upsilon_{\ep}= \cL_X (\cU /2\pi \Z )\cap \{x_3 =\ep\}
$ is a regular convex Jordan curve. Consider now $S_{\ep_1,\ep_2}$
the portion of $\cL_X$ that lies in the slab between the planes
$\{x_3=\ep_1\}$ and $\{x_3=\ep_2\}$, where $0<\ep_2<\ep_1<\ep_0$.
Then, as $S_{\ep_1,\ep_2}$ is convex and the curves $\Upsilon_{\ep}$
are convex Jordan curves, we get that the unit normal $\cN_{\cL}$ of
$\cL_X$ in this slab is a global diffeomorphism onto its image in
$\S^2$. Letting $\ep_1 \to 0$ and choosing $\delta>0$ sufficiently
small, we get that  $\cN_{\cL}$ is a global diffeomorphism from
$\cU^+ /2\pi \Z$ onto its spherical image in $\S^2$.

Consequently, by \eqref{uleg}, $X(\cU^+)$ is a graph over a
region in the $x_1,x_2$-plane. This concludes the proof of the first statement of the theorem.

In order to compute from an extrinsic point of view the angle of
this conical singularity, we consider the limit tangent cone $C:= t
\, \nu(s)$ at the singularity, where $t>0$ and $\nu(s)$ is a convex
curve on $\S^2$. The unit normal of this tangent cone at the origin
agrees with the limit unit normal $\alfa(s):\R /(2\pi \Z)\flecha
\S^2$ of the surface at the singularity, which is also a convex
Jordan curve. Let us recall that, by Theorem \ref{mainth2}, the
surface is uniquely determined by $\alfa(s)$, which can also be
chosen arbitrarily. Parameterizing $\alfa$ by arc-length, we get
working on $C$, that
$$\nu(s)= \alfa(s)\times \alfa'(s).$$ The cone angle $\theta$ is given by the
length of the convex curve $\nu(s)$, and this implies by the
Gauss-Bonnet theorem that $$\theta = \int ||\nu' (s)|| ds = \int k_g
(\alfa(s)) ds = 2\pi - \mathcal{A} (\alfa)\in (0,2\pi),$$ where $k_g
(\alfa(s))$ and $\mathcal{A} (\alfa)$ denote, respectively, the
geodesic curvature of $\alfa(s)$ in $\S^2$ and the area of the
smallest spherical region enclosed by $\alfa$.
 \end{proof}

Putting together Lemma \ref{graf} and Theorem \ref{mainth3}, we
conclude:
\begin{cor}
The space of non-removable embedded isolated singularities of $K=1$
surfaces in $\R^3$ is in one-to-one correspondence with the class of
real analytic regular convex Jordan curves in $\S^2$.

This correspondence assigns to each embedded isolated singularity its associated limit unit normal at the singularity.
\end{cor}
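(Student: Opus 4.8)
The plan is to derive this corollary by simply collating Lemma \ref{graf}, Theorem \ref{mainth2} and Theorem \ref{mainth3}; no new analysis is required. First I would pin down the correspondence itself: to a non-removable embedded isolated singularity $p=\psi(q)$ of a $K$-surface $\psi:D^*\flecha\R^3$ one associates its limit unit normal $\alfa:\S^1\flecha\S^2$, with the convention (fixed at the beginning of the section) that two such singularities are identified when the surfaces overlap on a common neighborhood of $p$.

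To see that this assignment is well defined and lands in the stated class, I would argue as follows. Since the singularity is embedded, the surface cannot have a branch point there, so by Theorem \ref{mainth1} a non-removable embedded singularity is necessarily non-extendable; and by Lemma \ref{graf} it has finite area. Hence it is an \emph{immersed conical singularity} in the sense of Theorem \ref{mainth2}, and therefore it carries a well-defined limit unit normal $\alfa$, a closed real analytic locally convex curve with admissible cusps in $\S^2$. Invoking Theorem \ref{mainth3}, embeddedness of the singularity upgrades $\alfa$ to a regular convex Jordan curve, which is precisely the target class.

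For surjectivity I would start from a real analytic regular convex Jordan curve $\alfa$ in $\S^2$; such a curve is in particular a closed real analytic locally convex curve with admissible cusps, so Theorem \ref{mainth2} produces a $K=1$ surface $X:\A_r\flecha\R^3$ realizing $\alfa$ as the limit unit normal of a non-extendable finite isolated singularity $p$. The converse half of Theorem \ref{mainth3} then guarantees that, $\alfa$ being a regular convex Jordan curve, this $p$ is an embedded isolated singularity, and since it is non-extendable it is also non-removable. For injectivity I would use the uniqueness clause in Theorem \ref{mainth2}: two non-removable embedded isolated singularities with the same limit unit normal $\alfa$ are both recovered from $\alfa$ through the representation \eqref{forbas} applied to the harmonic extension \eqref{incon2}; hence, after the translation aligning the two singular points (recall $X$ is determined by $N$ up to translation), they coincide on a neighborhood of the singularity and so represent the same point of the moduli space.

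The argument has no genuine obstacle; the one thing to handle carefully is the bookkeeping at the two ``degenerate'' ends of the classification — ruling out branch points, around which a $K$-surface is never embedded, and ruling out the removable case, a spherical cap whose ``limit normal'' degenerates to a point rather than a Jordan curve — so that the correspondence is honestly well defined and is a bijection \emph{onto} the class of regular convex Jordan curves rather than onto some larger family.
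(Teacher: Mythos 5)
Your argument is correct and is precisely the collation the paper itself intends: the paper derives this corollary with the single line ``Putting together Lemma \ref{graf} and Theorem \ref{mainth3}, we conclude,'' and your write-up just makes explicit the same chain (embedded $\Rightarrow$ not a branch point and finite area, hence an immersed conical singularity; Theorem \ref{mainth3} for both directions; uniqueness in Theorem \ref{mainth2} for injectivity). Nothing is missing and no different route is taken.
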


\section{The intrinsic classification of peaked
spheres}\label{sec:intrinsic}

\begin{defi}
A \emph{peaked sphere} in $\R^3$ is a closed convex surface
$S\subset \R^3$ (i.e. the boundary of a bounded convex set of
$\R^3$) that is a regular surface everywhere except for a finite set
of points $p_1,\dots, p_n\in S$, and such that $S\setminus
\{p_1,\dots, p_n\}$ has constant curvature $1$.

The points $p_1,\dots, p_n$ are called the \emph{singularities} of
the peaked sphere $S$.
\end{defi}
Equivalently, a peaked sphere can also be defined as an embedding
$$\phi:\S^2\setminus \{q_1,\dots, q_n\}\flecha \R^3$$ of constant
curvature $1$, such that $\phi$ extends continuously to $\S^2$. If
$\phi$ does not $C^1$-extend across $q_j$, then $p_j :=\phi (q_j)\in
\R^3$ is a singularity of $S:= \phi (\S^2)\subset \R^3$. That these
two definitions agree follows from a simple topological argument and
the local convexity of $K=1$ surfaces in $\R^3$.

It is clear from our analysis in Section \ref{sec:local} that the
singularities of a peaked sphere in $\R^3$ are conical, with conic
angles in $(0,2\pi)$. Thus, from an intrinsic point of view, peaked
spheres in $\R^3$ are well known objects.

There are no peaked spheres with exactly one singularity. For the
case of two singularities, there exist rotational peaked spheres,
and a simple application of Alexandrov reflection principle shows
that any peaked sphere with exactly two singularities is one of
these rotational examples. So, there is exactly a $1$-parameter
family of non-congruent peaked spheres with $n=2$ singularities, all
of them rotational (see Section \ref{sec:prelim} and Figure 1).

For $n>2$, peaked spheres in $\R^3$ with $n$ singularities exist,
and can be classified from an intrinsic point of view. This follows
from some classical results by Alexandrov and Pogorelov (see also
\cite{BuSh}) on the isometric realization and regularity in $\R^3$
of singular metrics of non-positive curvature, together with the
intrinsic classification of cone metrics of constant positive
curvature on $\S^2$ whose cone angles lie in $(0,2\pi)$ by Troyanov
\cite{Tro} and Luo-Tian \cite{LuTi}. Specifically, the
classification is:

\begin{teo}\label{teointri}
Let $\Lambda$ denote a conformal structure of $\S^2$ minus $n$
points, $n>2$, and let $\theta_1,\dots, \theta_n\in (0,1)$. Then, a
necessary and sufficient condition for the existence of a peaked
sphere $S\subset \R^3$ with $n$ singularities $p_1,\dots,p_n$ of
given conic angles $2\pi \theta_1,\dots, 2\pi \theta_n$, and such
that $\Lambda$ is the conformal structure of $S\setminus
\{p_1,\dots, p_n\}$ for its intrinsic metric, is that
 \begin{equation}\label{angles}
 n-2<\sum_{j=1}^n \theta_j <n-2 +{\rm min}_j \{\theta_j\}.
 \end{equation}
Moreover, any peaked sphere in $\R^3$ is uniquely determined up to
rigid motions by the conformal structure of $S\setminus \{p_1,\dots,
p_n\}$ and by the cone angles $2\pi \theta_1,\dots, 2\pi \theta_n$.

In particular, the space of peaked spheres in $\R^3$ with $n>2$
singularities is a $3n-6$ parameter family, modulo rigid motions.
\end{teo}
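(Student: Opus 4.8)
The plan is to combine three classical bodies of results — the Troyanov--Luo-Tian classification of spherical cone metrics on $\S^2$, Alexandrov's solution of the generalized Weyl problem, and Pogorelov's regularity and rigidity theorems for convex surfaces (see \cite{BuSh}) — with the local analysis of Section~\ref{sec:local}, which already tells us that the singularities of a peaked sphere are intrinsically conical with cone angles in $(0,2\pi)$. The scheme is: (a) build the abstract spherical cone metric from the data, (b) realize it as a convex surface, (c) upgrade the regularity, (d) run everything backwards for necessity, and (e) read off the dimension count from the resulting bijection.

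\textbf{Existence.} Assuming \eqref{angles}, I would first produce the abstract metric and then realize it in $\R^3$. By Troyanov \cite{Tro} and Luo-Tian \cite{LuTi}, the inequalities \eqref{angles} are precisely the condition for the existence of a conformal metric $ds^2$ of constant curvature $1$ on $\S^2$, in the conformal class $\Lambda$, with conical singularities of angles $2\pi\theta_1,\dots,2\pi\theta_n$ at the $n$ marked points, and such a metric is unique. Since every $\theta_j<1$, the curvature of $ds^2$ is nonnegative in the sense of Alexandrov (the smooth curvature equals $1$ and each cone point carries a positive atom $2\pi(1-\theta_j)$), so $(\S^2,ds^2)$ is a closed metric of nonnegative curvature; Alexandrov's realization theorem then produces an isometric embedding of it onto the boundary $S$ of a compact convex body in $\R^3$, and $S$ is non-degenerate because $ds^2$ is not flat. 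Pogorelov's regularity theorem shows that $S$ is real-analytic with positive Gauss curvature at exactly those points where $ds^2$ is — i.e.\ away from the images $p_1,\dots,p_n$ of the cone points — so $S\setminus\{p_1,\dots,p_n\}$ is a regular $K=1$ surface; and because a cone angle $\neq 2\pi$ forces a genuine vertex, each $p_j$ is a true (non-removable) singularity. Thus $S$ is a peaked sphere realizing the prescribed data.

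\textbf{Necessity and uniqueness.} For the converse, given a peaked sphere $S$ with this data, Section~\ref{sec:local} (Proposition~\ref{coni}, Theorem~\ref{mainth3}) shows its intrinsic metric is a spherical cone metric on $\S^2$ in class $\Lambda$ with the prescribed angles in $(0,2\pi)$; the necessity half of \cite{Tro, LuTi} then gives \eqref{angles} (the left inequality being just Gauss-Bonnet, since $\mathrm{Area}(S)=2\pi(\sum_j\theta_j-(n-2))$). For uniqueness: two peaked spheres with the same $\Lambda$ and the same angles have, by the uniqueness part of \cite{Tro, LuTi}, the same intrinsic metric, hence are isometric closed convex surfaces, hence congruent by Pogorelov's rigidity theorem. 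So the assignment sending a peaked sphere to $(\Lambda,\theta_1,\dots,\theta_n)$ is a bijection, modulo rigid motions, onto the set of data satisfying \eqref{angles}. Since that set is open in $\mathcal{M}_{0,n}\times(0,1)^n$, and the moduli space $\mathcal{M}_{0,n}$ of $\S^2$ with $n$ marked points has real dimension $2n-6$, the space of peaked spheres with $n>2$ singularities has dimension $(2n-6)+n=3n-6$ modulo rigid motions.

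\textbf{Main obstacle.} The deep inputs — Troyanov--Luo-Tian, Alexandrov, Pogorelov — are used as black boxes; the real work is the bookkeeping that glues them together: verifying the nonnegative-curvature hypothesis for Alexandrov's theorem (which is exactly where $\theta_j<1$ enters), excluding a degenerate (doubly covered planar) realization, invoking Pogorelov's regularity to see that the realized surface is analytic of curvature $1$ off precisely the $n$ cone points and not merely off a larger set, confirming those $n$ points are non-removable singularities, and checking that the correspondence with the intrinsic data is a genuine bijection so that the $3n-6$ count is sharp rather than an upper bound.
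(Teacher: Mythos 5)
Your proposal is correct and follows essentially the same route as the paper's own proof: Troyanov--Luo-Tian for the abstract cone metric, Alexandrov's realization theorem (with the nonnegative-curvature hypothesis supplied by $\theta_j<1$), Pogorelov's regularity and rigidity, and the local analysis of Section~\ref{sec:local} for the converse direction. The only differences are cosmetic --- you spell out the Pogorelov rigidity step and the $(2n-6)+n$ dimension count a bit more explicitly than the paper does.
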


\begin{proof}
In \cite{Tro}, Troyanov proved that \eqref{angles} is a sufficient
condition for the existence of a metric of constant curvature $1$ on
$\S^2\setminus \{q_1,\dots, q_n\}$, and such that this metric has at
each $q_j$ a conical singularity of angle $2\pi \theta_j$. Then, Luo
and Tian proved in \cite{LuTi} that if $\theta_j \in (0,1)$ for all
$j$, then \eqref{angles} is also a necessary condition for the
existence of such a metric. Moreover, this metric is unique under
the above hypotheses.

So, in order to prove Theorem \ref{teointri} it suffices to show
that all these metric are isometrically embeddable into $\R^3$ as
peaked spheres, and that the intrinsic metric of any peaked sphere
is isometric to one of the Troyanov-Luo-Tian cone metrics on the
sphere, with all angles in $(0,2\pi)$. The second property is clear,
since we showed in Section \ref{sec:local} that any non-removable
embedded isolated singularity of a $K=1$ surface in $\R^3$ is a
conical singularity of angle in $(0,2\pi)$.

As regards the isometric realization in $\R^3$ of these abstract
cone metrics, Alexandrov proved in \cite{Ale} the following solution
to the generalized Weyl's embedding problem: \emph{any $2$-manifold
with a singular metric of non-negative curvature homeomorphic to a
sphere is isometric to a closed convex surface in $\R^3$;
conversely, any closed convex surface in $\R^3$ is, intrinsically, a
$2$-manifold of non-negative curvature}.

A precise definition of the concept of a $2$-dimensional manifold of
non-negative curvature in the Alexandrov sense, together with an
explanation of this deep theorem, can be consulted in page 24 of
\cite{BuSh}.

It turns out that $\S^2$ endowed with any of the cone metrics on
$\S^2$ with angles in $(0,2\pi)$, classified by Troyanov and
Luo-Tian, is a manifold of non-negative curvature in the Alexandrov
sense. This is just a consequence of the convexity condition
$\theta_j \in (0,1)$ at the singularities. An alternative proof can
be given as follows. First, it follows from the work of Bryant
\cite[Proposition 4]{Bry} that any $2$-dimensional Riemannian metric
of constant positive curvature around a conical singularity is
isometric to a radial Riemannian metric. Moreover, if the conic
angle belongs to $(0,2\pi)$, such a radial Riemannian metric can be
realized in $\R^3$ as the first fundamental form of some
rotationally invariant $K=1$ surface. But this implies by convexity
and embeddedness of this rotational surface that the original
abstract Riemannian metric has non-negative curvature in the
Alexandrov sense, see Theorem 2.2.1 in page 24 of \cite{BuSh}.

As a consequence, if $g$ is a cone metric of constant curvature $1$
on $\S^2$ whose cone angles are all in $(0,2\pi)$, then there exists
a (singular) closed convex surface $S\subset \R^3$ that is isometric
to $(\S^2,g)$. We claim that $S$ is a peaked sphere. Indeed, this
follows from Pogorelov's regularity theorem in \cite{Pog2} (see
Theorem 3.1.1. in page 27 of \cite{BuSh}), as we explain next. As
the metric $g$ is everywhere regular, except for a finite number of
points $q_1,\dots, q_n\in \S^2$, the regularity theorem of Pogorelov
ensures that $S$ is a regular, smooth surface everywhere except on a
finite set of points $p_1,\dots, p_n\in S$. Obviously, in the
regular part of $S$, the Gaussian curvature is $K=1$. Thus, $S$ is a
peaked sphere, as claimed. This completes the proof.
\end{proof}

\section{Peaked spheres and harmonic maps}\label{sec:applic}

\subsection*{The Neumann problem for harmonic diffeomorphisms into
$\S^2$}

Let $\Omega\subset\R^2$ be a bounded domain whose boundary $\parc \Omega$
consists of a finite number of regular Jordan curves. The Neumann
problem for harmonic diffeomorphisms asks about the existence (and
uniqueness) of a harmonic map $g:\Omega\cup
\parc \Omega\flecha \S^2$ that is a diffeomorphism onto its image, and such that
 \begin{equation}\label{noicon}
   \left. \frac{\parc g}{\parc
\mathbf{n}} \right|_{\parc \Omega}  =0 \hspace{1cm}
\text{($\mathbf{n}$ is the exterior normal derivative along $\parc
\Omega)$.}
 \end{equation} We remark that this problem is conformally invariant,
i.e. only the conformal equivalence class of the complex domain
$\Omega$ matters for the problem. So, this domain can always be
assumed to be a \emph{bounded circular domain}, i.e. a disk
$D_1\subset \C$ with a finite collection of interior disjoint disks
removed.

\begin{teo}\label{noiman}
A harmonic map $g:\Omega\flecha \S^2$ is a solution to the Neumann
problem for harmonic diffeomorphisms if and only if it is the Gauss
map of a peaked sphere in $\R^3$, with respect to its extrinsic
conformal structure.

As a consequence, the spaces of harmonic maps into $\S^2$ that solve
the above Neumann problem for some planar domain with $n>2$ boundary
components is a $3n-6$ dimensional family (here the planar domain
$\Omega$ is \emph{not} fixed; only the number $n$ is).
\end{teo}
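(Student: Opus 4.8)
The plan is to establish Theorem \ref{noiman} by exploiting the correspondence, developed in Sections \ref{sec:prelim} and \ref{sec:local}, between $K=1$ surfaces in $\R^3$ and their Gauss maps, which are harmonic into $\S^2$ for the extrinsic conformal structure. First I would prove the ``only if'' direction is essentially tautological in the reverse reading: if $S$ is a peaked sphere in $\R^3$ with $n$ singularities, parametrized by an embedding $\phi:\S^2\setminus\{q_1,\dots,q_n\}\flecha\R^3$, then $S\setminus\{p_1,\dots,p_n\}$ is a $K=1$ surface, so by \eqref{forbas} its Gauss map $N$ is harmonic for the extrinsic conformal structure. By Theorem \ref{mainth1}, each singularity, being non-extendable (a peaked sphere is not $C^1$ at its singular points), has the extrinsic conformal structure of an annulus; by Theorem \ref{mainth2} and Theorem \ref{mainth3}, since the singularities are embedded, the limit normal at each $p_j$ is a regular convex Jordan curve $\alfa_j$ in $\S^2$ and the normal derivative of $N$ vanishes along it (condition \eqref{incon2}). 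Thus, if we let $\Omega$ be the domain obtained by gluing the annular extrinsic conformal structures across these boundary circles — topologically a sphere with $n$ open disks removed, hence conformally a bounded circular domain — the Gauss map $N$ descends to a harmonic map $g:\Omega\flecha\S^2$ satisfying the Neumann condition \eqref{noicon}, and it is a diffeomorphism onto its image because $N$ is an immersion wherever $S$ is regular and $S$ is embedded (globally injective Gauss map for a convex body). This gives one implication.

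For the converse, suppose $g:\Omega\flecha\S^2$ is a harmonic diffeomorphism onto its image satisfying \eqref{noicon}, with $\Omega$ a bounded circular domain with boundary circles $C_1,\dots,C_n$. The idea is to run the construction of Theorem \ref{mainth2} at each boundary component: since $g$ is a diffeomorphism, the restriction $\alfa_j := g|_{C_j}$ is a regular simple closed curve in $\S^2$, and — because $g$ is a local diffeomorphism up to and including the boundary while having vanishing normal derivative there — $\alfa_j$ must be locally convex; being an embedded boundary component of a region carried diffeomorphically by $g$, each $\alfa_j$ is in fact a convex Jordan curve. By the Neumann condition, $g$ extends by Schwarzian-type reflection (as in the proof of Theorem \ref{mainth2}, \eqref{defome}) harmonically across each $C_j$ into a slightly larger domain, and along each $C_j$ the map $X$ defined from $g$ via \eqref{forbas} is single-valued and constant (this is exactly the step where $X(\S^1)=p$ in Theorem \ref{mainth2} ii)). Hence the frontal $X:\Omega\flecha\R^3$ determined by $g$ is a $K=1$ surface whose only singularities are the $n$ punctures obtained by collapsing the $C_j$'s, each of which is, by Theorem \ref{mainth3}, an embedded conical singularity. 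It remains to see that $X$ closes up to a \emph{compact convex} surface: this is where I would combine strict local convexity of $X$ (which holds since $II>0$, i.e. $K=1>0$ with the chosen orientation) with the fact that its Gauss map $g$ is a global diffeomorphism onto its image in $\S^2$; a global diffeomorphism Gauss map forces the image of $X$ to be the boundary of a convex body (this is the classical Hadamard–Stoker type argument, one direction of which is already used implicitly via Lemma \ref{graf} and the Legendre transform in Theorem \ref{mainth3}). Thus $S = \overline{X(\Omega)}$ is a peaked sphere with $n$ singularities and $g$ is its Gauss map for the extrinsic conformal structure.

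The dimension count in the ``as a consequence'' clause then follows by transporting the parameter space of Theorem \ref{teointri} through this bijection. By the first part, solutions of the Neumann problem with $n>2$ boundary components correspond bijectively to peaked spheres in $\R^3$ with $n$ singularities, modulo rigid motions (rigid motions of $\R^3$ act trivially on the Gauss map only up to rotation, but one checks the correspondence is with the moduli space; alternatively one notes a peaked sphere is determined by its Gauss map up to translation via \eqref{forbas}, and rotations of $\R^3$ correspond to rotations of $\S^2$, which do not change the conformal equivalence class of the solution of the Neumann problem). Theorem \ref{teointri} asserts this moduli space is a $3n-6$ parameter family, so the same is true for the Neumann problem solutions, with the circular domain $\Omega$ varying.

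The main obstacle I expect is the last step of the converse: showing that the $K=1$ frontal $X$ obtained by gluing the annular pieces actually closes up to a \emph{globally embedded convex} surface rather than merely an immersed one with conical points. Local convexity and the diffeomorphism property of the Gauss map are the right ingredients, but turning them into global embeddedness requires a careful topological/convexity argument — essentially reconstructing the convex body as the envelope of its tangent planes, indexed by the global diffeomorphism $g:\Omega\flecha g(\Omega)\subset\S^2$, and verifying that the support function so obtained is well-defined and convex. I would handle this by using the Legendre transform \eqref{leg} locally around each singularity as in Theorem \ref{mainth3} to see the surface is a convex graph there, patching these with the strict convexity on the regular part, and invoking that a strictly locally convex surface in $\R^3$ with injective Gauss map onto a domain of $\S^2$, together with continuous closure at finitely many points, bounds a convex body.
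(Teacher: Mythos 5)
Your proposal follows essentially the same route as the paper: the forward direction uses that the extrinsic conformal structure of a peaked sphere is a bounded circular domain and that $X$ constant on each boundary circle forces the Neumann condition via \eqref{forbas}, and the converse builds $X$ from $g$ by \eqref{forbas} and invokes the existence part of Theorem \ref{mainth2} for single-valuedness around each boundary circle. The only divergence is that you spell out the global convexity/embeddedness of the resulting surface (via local convexity, the injective Gauss map, and the Legendre transform), a point the paper's proof leaves implicit, so this is added care rather than a different approach.
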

\begin{proof}
Let $S\subset \R^3$ denote a peaked sphere in $\R^3$. As explained
in Theorems \ref{mainth2} and \ref{mainth3}, $S$ has around any of
its singularities the conformal structure of an annulus with respect
to the second fundamental form. By uniformization, the conformal
type of $S\subset \R^3$ for the second fundamental form is that of a
bounded circular domain $\Omega$. Also, the Gauss map
$N:\Omega\flecha \S^2$ is a harmonic map, which is a diffeomorphism
onto its image. Moreover, when parameterized conformally for its
extrinsic conformal structure, the peaked sphere $X:\Omega\flecha
\R^3$ satisfies that $X$ is constant along each boundary component.
So, by \eqref{forbas}, its Gauss map satisfies the Neumann condition
\eqref{noicon}.

Conversely, let $N:\Omega\flecha \S^2$ denote a solution to the
Neumann problem for harmonic diffeomorphisms into $\S^2$, where
$\Omega$ is a bounded circular domain. Let $X:\Omega\flecha \R^3$
denote the surface with $K=1$, possibly with singularities,
determined from the Gauss map by the representation formula
\eqref{forbas}. Clearly, $X$ is regular and constant along each
boundary circle $C_j$ of $\Omega$, so we only need to ensure that
$X$ is single valued. In order to do this, as the fundamental group
of $\Omega$ is generated by the boundary circles $C_j$, it suffices
to show that $X$ is single valued around each of these circles. But
this property was already proved in the local classification theorem
of conical singularities, see the existence part in Theorem
\ref{mainth2}. So, this concludes the proof.
\end{proof}

As a consequence of this and the fact that peaked spheres in $\R^3$
with two singularities are rotational and their conformal structure
is controlled (see Section 2), we have

\begin{cor}
Let $\mathbb{A}(r,R)$ be the annulus $\{z: r<|z|<R\}$. Then, the
Neumann problem for harmonic diffeomorphisms
$g:\mathbb{A}(r,R)\flecha \S^2$ has a solution if and only if
$R/r>e^{\pi}$.

In that case, the solution is unique and radially symmetric.
\end{cor}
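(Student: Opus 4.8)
The plan is to reduce the statement to the classification of peaked spheres with two singularities, together with the explicit description of the extrinsic conformal structure of the rotational examples $X_A$ from Section~\ref{sec:prelim}. First I would note that, since $\mathbb{A}(r,R)$ has two boundary components, Theorem~\ref{noiman} identifies a solution $g:\mathbb{A}(r,R)\flecha\S^2$ of the Neumann problem for harmonic diffeomorphisms with the Gauss map, for the extrinsic conformal structure, of a peaked sphere $S\subset\R^3$ having \emph{exactly two} singularities and whose extrinsic conformal structure is conformally $\mathbb{A}(r,R)$, i.e. an annulus of modulus $R/r$. By the Alexandrov-reflection argument recalled in Section~\ref{sec:intrinsic}, every peaked sphere with exactly two singularities is one of the rotational peaked spheres $X_A$, $A\in(0,1)$, from Section~\ref{sec:prelim}. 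Hence the solutions of the Neumann problem on $\mathbb{A}(r,R)$ are in bijection with the values $A\in(0,1)$ for which the extrinsic conformal modulus of $X_A$ equals $R/r$, and the corresponding $g$ is then the Gauss map $N_A$ (up to a rotation of $\S^2$ and a conformal automorphism of the annulus).

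Next I would analyze this modulus as a function of $A$. As computed in Section~\ref{sec:prelim}, the extrinsic conformal structure of $X_A$ is the annulus of modulus $e^{2a(A)}$, where $a(A)=\int_0^{\pi/2}(1-A^2\sin^2 u)^{-1/2}\,du$. I would check that $a$ is continuous and strictly increasing on $(0,1)$ (the integrand is strictly increasing in $A$ for each fixed $u\in(0,\pi/2)$), with $a(A)\to\pi/2$ as $A\to 0^+$ and $a(A)\to+\8$ as $A\to 1^-$ (near $A=1$ the integral diverges, logarithmically, at $u=\pi/2$). Consequently $A\mapsto e^{2a(A)}$ is a homeomorphism of $(0,1)$ onto $(e^{\pi},+\8)$.

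Combining the two steps yields the corollary. If $R/r\le e^{\pi}$ there is no $A\in(0,1)$ with $e^{2a(A)}=R/r$, so by the correspondence above the Neumann problem has no solution. If $R/r>e^{\pi}$ there is a unique such $A$, hence a unique rotational peaked sphere $X_A$ whose extrinsic conformal structure is $\mathbb{A}(r,R)$; its Gauss map $N_A$ solves the Neumann problem (as noted in the proof of Theorem~\ref{noiman}), and, being the Gauss map of a rotationally invariant surface, $N_A$ is radially symmetric. Uniqueness follows because the rotational peaked spheres $X_A$ form a one-parameter family parametrized bijectively, via $A\mapsto e^{2a(A)}$, by their extrinsic conformal modulus, so no non-rotational competitor can occur.

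The main obstacle I expect is the monotonicity and limiting analysis of $a(A)$ — in particular, pinning down that its range is exactly $(\pi/2,+\8)$, so the modulus $e^{2a(A)}$ sweeps out precisely $(e^{\pi},+\8)$ — together with stating uniqueness in the correct sense, namely up to the conformal automorphisms of the annulus and the isometries of $\S^2$. Everything else is a direct application of Theorem~\ref{noiman} and the two-singularity classification of peaked spheres.
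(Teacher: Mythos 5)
Your proposal is correct and follows essentially the same route as the paper, which derives the corollary directly from Theorem \ref{noiman}, the Alexandrov-reflection classification of two-singularity peaked spheres as the rotational examples $X_A$, and the Section \ref{sec:prelim} computation that the extrinsic conformal modulus $e^{2a(A)}$ sweeps out exactly $(e^{\pi},+\8)$ as $A$ ranges over $(0,1)$. The only difference is that you spell out the monotonicity and limiting behavior of $a(A)$, which the paper asserts without detail.
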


\subsection*{CMC surfaces with free boundary}

A well studied problem in the theory of CMC surfaces is the free
boundary problem (or \emph{capillarity} problem) of finding all
compact CMC surfaces that meet a certain support surface $S\subset
\R^3$ at a constant angle along their boundary. Our analysis on
$K=1$ surfaces in $\R^3$ with isolated singularities provides some
interesting consequences in this context.

First, one has the following reflection principle in the spirit of
the usual Schwarz's reflection principle (i.e. that CMC surfaces
meeting a plane orthogonally along its boundary can be analytically
extended by reflection across this plane).

Let $\cU$ be a bounded symmetric domain in $\C$, i.e.
$\cU=\cU^*:=\{\bar{z}: z\in \cU\}$. Assume that $\cU\cap \R\neq
\emptyset$ and call $\cU^+ :=\cU\cap \C_+$.

\begin{cor}\label{rprinc}
Let $f:\cU^+\flecha \R^3$ be a conformally immersed $H=1/2$ surface
in $\R^3$. Assume that $f$ extends $C^1$ to
$\Gamma:=\overline{\cU^+}\cap \R$, so that $f|_{\Gamma}$ is
contained in the sphere $\S^2(1)$ of radius one, and meets this
sphere tangentially.

Then, $f$ can be analytically extended to $\cU$ by the formula
\begin{equation}\label{reprin}
f(\bar{z})=-f(z)-2 N(z),
\end{equation}
where here $N:\cU^+\flecha \S^2$ is the unit normal of $f$.
\end{cor}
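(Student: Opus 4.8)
The plan is to reduce the statement to the reflection argument in the proof of Theorem~\ref{mainth2}, via the relation between $H=1/2$ surfaces and $K=1$ surfaces as parallel surfaces. After translating, we may assume $\S^2(1)$ is centered at the origin. Let $N:\cU^+\flecha\S^2$ be the unit normal of $f$, oriented so that, by the parallel surface relation recalled in Section~\ref{sec:prelim} (the $H=1/2$ surface $f$ corresponds to $X-N$), the map $X:=f+N:\cU^+\flecha\R^3$ is a $K=1$ frontal with Gauss map $N$; recall that then $N$ is harmonic, $X$ is recovered from $N$ by \eqref{forbas}, and $X$ solves \eqref{eqforX} in any conformal parameter for its second fundamental form. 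A short computation from \eqref{fundam} shows that $\esiz df,df\esde=4(\mu+\rho)\,|dz|^2$, so the conformal parameter $z=u+iv$ of $f$ is also conformal for the second fundamental form of $X$; hence $X$ satisfies \eqref{eqforX} on $\cU^+$.

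The first point is that the tangency hypothesis forces $X|_\Gamma\equiv 0$. Indeed, for $q\in\Gamma$ the tangent planes of $f$ and of $\S^2(1)$ at $f(q)$ coincide, so $N(q)\perp T_{f(q)}\S^2(1)=f(q)^{\perp}$, i.e.\ $N(q)=\pm f(q)$; with the chosen orientation of $N$ one has $N(q)=-f(q)$, whence $X(q)=f(q)+N(q)=0$ (the opposite sign would amount to reversing the orientation of $f$). Next I would improve the regularity of $X$ up to $\Gamma$. Since $f$ extends $C^1$ across $\Gamma$, the coefficient $4(\mu+\rho)$ of $\esiz df,df\esde$ is bounded near $\Gamma$, and as $\mu\geq\rho>0$ we get $\int\mu<\8$ locally near $\Gamma$, hence $\int(|X_u|^2+|X_v|^2)\,du\,dv=4\int\mu\,du\,dv<\8$ there; together with the boundedness of $X=f+N$ this gives $X_1,X_2,X_3\in H^1\cap C^0$ near $\Gamma$. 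Applying Lemma~\ref{jacobo} on small half-disks centered at points of $\Gamma$, whose boundary consists of an arc inside $\cU^+$ (where $X$ is already real analytic) together with a segment of $\Gamma$ (where $X\equiv 0$), we conclude that $X$ extends $C^{\8}$ up to $\Gamma$, still with $X|_\Gamma\equiv 0$.

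With $X$ smooth up to $\Gamma$ and vanishing there, the reflection argument from the proof of Theorem~\ref{mainth2} applies verbatim: the map $\hat X:\cU\flecha\R^3$ given by $\hat X(z)=X(z)$ for ${\rm Im}\,z\geq 0$ and $\hat X(u,v)=-X(u,-v)$ for ${\rm Im}\,z<0$ is $C^1$ across $\Gamma$ (the matching of first derivatives uses $X|_\Gamma\equiv 0$, so $X_u|_\Gamma=0$), solves \eqref{eqforX} on $\cU$ because $-X(u,-v)$ solves it as well, and is therefore real analytic by elliptic regularity for the analytic semilinear system \eqref{eqforX}; moreover $\hat X(u,-v)=-\hat X(u,v)$. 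Writing $\hat X_u\times\hat X_v=\xi\,F$ near $\Gamma$ with $\xi$ real analytic, odd in $v$ and vanishing on $\Gamma$, and $F$ real analytic, even in $v$ and non-vanishing along $\Gamma$, the Gauss map extends real-analytically by $\hat N=F/\|F\|$, with $\hat N(u,-v)=\hat N(u,v)$ and $\hat N|_{\cU^+}=N$. Thus $\hat X:\cU\flecha\R^3$ is a real analytic $K=1$ frontal extending $X$, and by the parallel surface relation $\hat f:=\hat X-\hat N:\cU\flecha\R^3$ is an $H=1/2$ surface, regular near $\Gamma$ (since $\hat f|_{\cU^+}=X-N=f$), that extends $f$ analytically. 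Finally, for $z=u+iv\in\cU^+$, using that $\hat X$ is odd and $\hat N$ even under $v\mapsto -v$,
\[
f(\bar z)=\hat f(u,-v)=\hat X(u,-v)-\hat N(u,-v)=-\hat X(z)-\hat N(z)=-\bigl(f(z)+N(z)\bigr)-N(z)=-f(z)-2N(z),
\]
which is exactly \eqref{reprin}.

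The step I expect to be the main obstacle is the boundary regularity of $X$ at $\Gamma$: verifying the hypotheses of Lemma~\ref{jacobo}, in particular the $H^1$ bound, is where the $C^1$-extendability of $f$ is genuinely used, through the identity $\esiz df,df\esde=4(\mu+\rho)|dz|^2$. Everything else is the reflection argument of Theorem~\ref{mainth2} transplanted to $X$, together with keeping track of the parallel surface correspondence and of the orientation of $N$ so as to obtain precisely the formula \eqref{reprin}.
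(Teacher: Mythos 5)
Your proof is correct and follows essentially the same route as the paper's: pass to the parallel $K=1$ frontal $X=f+N$, observe that tangency forces $X|_{\Gamma}=0$, verify the hypotheses of Lemma~\ref{jacobo} to get smoothness of $X$ up to $\Gamma$, and then perform the odd/even reflection of $X$ and $N$ exactly as in the proof of Theorem~\ref{mainth2} before reading off \eqref{reprin}. The only (minor) divergence is in justifying the $H^1$ hypothesis: you derive it from the boundedness of $\esiz df,df\esde=4(\mu+\rho)|dz|^2$ given the $C^1$ extension of $f$, while the paper invokes finiteness of the area of the spherical image; your version is, if anything, the more direct way to get the needed bound on $\int \mu$.
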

\begin{proof}
Consider the parallel $K=1$ frontal $X=f+N:\cU^+\flecha \R^3$. Then
$X$ extends continuously to $\Gamma$, with
$X|_{\Gamma} =0\in \R^3$. Observe that from the condition $K=1$, the
area of $X$ agrees with the $\S^2$-area of the spherical image
$N(\overline{\cU^+})$, which is finite around any point $(u_0,0)\in
\Gamma$ since $N$ is continuous. Thus, we are in the conditions of
Lemma \ref{jacobo} around any such point, and so $X$ can be extended
across $\Gamma$ by $X(u,-v)=-X(u,v)$ (see the proof of Theorem
\ref{mainth2}). The Gauss map of $X$ (and $f$) is extended by
$N(u,-v)=N(u,v)$. Once here, formula \eqref{reprin} follows
directly.
\end{proof}

Interestingly, the reflected part described by \eqref{reprin} is the
parallel $H=-1/2$ surface of the original $H=1/2$ surface
$f:\cU^+\flecha \R^3$, composed with the central isometry of $\R^3$
about the center of the sphere. Indeed, if $f^{\sharp}$ is the
parallel $H=-1/2$ surface of $f$, we see from \eqref{reprin} that
$f(\bar{z})= - f^{\sharp} (z)$.

So, the extended surface $f:\cU\flecha \R^3$ is \emph{self-parallel}
(a CMC surface $f:\Sigma\flecha \R^3$ is \emph{self-parallel} if
there exists an antiholomorphic diffeomorphism $J:\Sigma\flecha
\Sigma$ and an orientation preserving rigid motion $\Phi$ of $\R^3$
such that $\Phi \circ f^{\sharp} = f\circ J$, where
$f^{\sharp}:\Sigma\flecha \R^3$ is the parallel CMC surface of $f$).
Thus, we obtain a very general procedure for constructing
self-parallel CMC surfaces in $\R^3$, just by solving an adequate
geometric Cauchy problem.

In addition, the $H=1/2$ surfaces that are parallel to peaked
spheres with $K=1$ in $\R^3$ also have interesting global
properties. Indeed, by the above discussion, they are locally convex
$H=1/2$ immersions $$f:\Omega\subset \C\flecha \R^3,$$ where
$\Omega\subset \C$ is a bounded circular domain with $n\geq 2$
boundary components, satisfying:

\begin{enumerate}
  \item
Each boundary curve $\Gamma_j:=f(C_j)$ (where $\parc \Omega=C_1\cup
\cdots \cup C_n$) is a regular convex Jordan curve contained in a
sphere $\S^2(p_j;1)$ of radius one centered at some $p_j\in \R^3$,
$j=1,\dots, n$.
 \item
The surface meets $\S^2(p_j;1)$ tangentially along $\Gamma_j$ for each
$j=1,\dots, n$.
\end{enumerate}
So, these CMC surfaces are solutions to a free boundary (or
capillarity) problem where the support surface $S$ is a collection
of $n\geq 2$ spheres of radius one in $\R^3$, and the surface meets
this configuration tangentially along all its boundary components.

When $n=2$, the peaked sphere is rotational, and so the parallel CMC
surface $S_H$ is a compact convex piece of an unduloid, bounded by
two parallel circles (actually, the largest piece of the unduloid
with these conditions).

When $n>2$, by the results of Section 4, the space of such $H=1/2$
surfaces $S_H$ with spherical boundaries is a $3n-6$ dimensional
family.

To all these $H=1/2$ surfaces $S_H$ that are parallel to peaked
spheres in $\R^3$, the reflection principle of Corollary
\ref{rprinc} applies. When $n=2$ we obtain the complete unduloid, by
repeatedly applying this reflection principle. In contrast, when
$n>2$ we obtain complete branched CMC surfaces with an infinite
number of ends, again by reflection across all boundary components
and iterating this process.

It must be emphasized that, although the starting compact surface
with boundary $S_H$ is regular (i.e. free of branch points), its
reflected part will always encounter branch points. This is a
consequence of the following facts.
 \begin{enumerate}
   \item
The reflected piece $S_H^*$ of $S_H$ across a boundary curve in the
sense of Corollary \ref{rprinc} is its parallel surface, up to a
rigid motion. So $S_H^*$ will be an immersion (i.e. free of branch
points) if and only if $S_H$ is free of umbilic points.
 \item
Consider the \emph{double Riemann surface} $\bar{M}_g$ of $\Omega$,
which is a compact Riemann surface of genus $g=n-1$. Let $Q \, dz^2$
denote the Hopf differential of $S_H$, which is defined on $\Omega$.
As ${\rm Im} Q \, dz^2$ vanishes along $\parc \Omega$, we can extend
it to be a holomorphic quadratic differential on the compact Riemann
surface $\bar{M}_g$, by Schwarz reflection principle. But as $n>2$,
the genus of the surface is greater than one. This means that $Q\,
dz^2$ must vanish somewhere on $\bar{M}_g$, and thus by symmetry we
conclude that $Q$ vanishes somewhere in $\Omega$, i.e. that $S_H$ is
not free of umbilic points.
 \end{enumerate}
As a consequence, we get the existence of a $3n-6$ dimensional
family (for $n>2$) of complete, branched self-parallel CMC surfaces in $\R^3$ with genus
zero and an infinite number of ends.

\subsection*{Open problems}

Let us recall that the extrinsic conformal structure (i.e. the
conformal structure induced by the second fundamental form) of a
peaked sphere is that of a bounded circular domain in $\C$.

\vspace{0.3cm}

\noindent {\bf Problem 1.} Which bounded circular domains in $\C$
are realizable as the extrinsic conformal structure of a peaked
sphere in $\R^3$? Is a peaked sphere uniquely determined by its
extrinsic conformal structure?

\vspace{0.3cm}

Problem 1 has a strong connection with the Neumann problem for
harmonic diffeomorphisms. Indeed, a classification of peaked spheres
in terms of their extrinsic conformal structure would solve
completely the Neumann problem for harmonic diffeomorphisms into
$\S^2$. Problem 1 follows the spirit of some previous classification
theorems of entire solutions to elliptic PDEs with a finite number
of singularities, in terms of some underlying conformal structure.
See \cite{GMM,FLS,Fer,CMM}.

\vspace{0.3cm}

\noindent {\bf Problem 2.} Find necessary and sufficient conditions
for a set of points $p_1,\dots, p_n\in \R^3$ to be realized as the
set of singularities of a peaked sphere in $\R^3$. Are two peaked
spheres with the same singularities $p_j\in \R^3$  necessarily the
same?

\vspace{0.3cm}

Problem 2 is connected with the free boundary capillarity problem
for CMC surfaces, in the case that one wishes to prescribe the
centers of the spheres (and not just the number of spheres and their
common radius). In any case, it is clear that an arbitrary
configuration of $n$ points will not be in general the singular set
of a peaked sphere in $\R^3$.

\vspace{0.3cm}

\noindent {\bf Problem 3.} Can one realize any conformal metric of
constant curvature $1$ on $\S^2$ with a finite number of conical
singularities as the intrinsic metric of an immersed $K=1$ surface
in $\R^3$?

\vspace{0.3cm}

The results by Alexandrov and Pogorelov show that these metrics are
realized as the intrinsic metric of peaked spheres, \emph{provided}
all conical angles are in $(0,2\pi)$, but there are many other
abstract cone metrics. Such an isometric realization in $\R^3$ must
necessarily be non-embedded. It must be emphasized that, by our
local study, any conical singularity of arbitrary angle can be
realized as an immersed $K=1$ surface in $\R^3$ in many different
ways.

Let us also point out that a complete classification for conformal
metrics of positive constant curvature on $\S^2$ with $n$ conical
singularities remains open if $n>3$ (see \cite{UmYa,Ere} for the
case of three conical singularities).

\def\refname{References}

\end{document}